\newtheorem{thm}{Theorem}[section]
\newtheorem{prop}[thm]{Proposition}
\newtheorem{lem}[thm]{Lemma}
\newtheorem{rmk}{Remark}[section]
\newtheorem{dfn}{Definition}[section]
\newcommand{\R}{{\Bbb R}}
\newcommand{\N}{{\Bbb N}}
\newcommand{\sfe}{{S^{n-1}}}
\newcommand{\sfedue}{{S^2}}
\newcommand{\Hsfe}{{\mathcal{H}}^{n-1}}
\newcommand{\Hdue}{{\mathcal H}^2}
\newcommand{\coco}{{\mathcal K}^n}
\newcommand{\mw}{\mathsf{b}}
\newcommand{\wn}{\omega_n}
\newcommand{\s}{\textsf{s}}
\def\Sn{\mathcal{S}_n}
\begin{document}

\begin{title}{Convexity of solutions and Brunn-Minkowski inequalities for Hessian equations in $\R^3$}
\end{title}

\date{}

\author{Paolo Salani
\footnote{Dipartimento di Matematica ``U. Dini'',
viale Morgagni 67/A, 50134 Firenze, Italy;
salani@math.unifi.it}
}

\maketitle

\begin{abstract}
\noindent By using Minkowski addition of convex functions, we prove convexity and rearrangement
properties of solutions to some Hessian equations in $\R^3$  and Brunn-Minkowski and isoperimetric inequalities for
related functionals.
\end{abstract}



\section{Introduction}
Convexity properties of solutions to partial differential equations are an interesting issue of investigations
since many years and to compile an exhaustive bibliography is almost impossible.
A good reference book has been for a long time the monograph by Kawohl \cite{K}, but a quarter of a century has now passed since
its publication and new techniques and results appeared along these years.
I just recall the ones that are mostly connected to the present paper and address the interested readers
to \cite{SaLectNotes} for more references. In particular, in 1985 Caffarelli-Friedman \cite{CaffF}
(and Singer-Wong-Yau-Yau \cite{SWYY})
devised a {\em microscopic} technique, based on a smart combination of a suitable {\em constant rank theorem}
and continuity method, which opened the way to many results and improvements, see for instance
\cite{BianGuan, BianGuan2, CGM, GuanMa, Kore, Korelewis, LiuMaXu,  MaXu} and references therein.
In 1997 Alvarez-Lasry-Lions \cite{ALL} developed instead a {\em macroscopic} technique based on the convex envelope,
which found its level sets counterpart in \cite{CS1} and \cite{BLS}.
Hence convexity properties of solutions to many elliptic problems are now well understood.
However, some difficult interesting problems are still unsolved to our knowledge, especially for fully nonlinear operators.

Here we will deal in particular with the following Dirichlet problems for Hessian equations:
\begin{equation}\label{eqL}
\left\{\begin{array}{ll}
S_k(D^2u)=\Lambda_k(\Omega)(-u)^k\quad&\text{in }\Omega\,,\\
u=0\quad&\text{on }\partial\Omega\,,\\
u<0\quad&\text{in }\Omega\,,
\end{array}
\right.
\end{equation}
and
\begin{equation}\label{eq1}
\left\{\begin{array}{ll}
S_k(D^2u)=1\quad&\text{in }\Omega\,,\\
u=0\quad&\text{on }\partial\Omega\,,
\end{array}
\right.
\end{equation}
where $\Omega$ is a bounded convex domain of $\R^n$ and $S_k(D^2 u)$ is the $k$-th elementary symmetric function of the eigenvalues of $D^2 u$,
$k\in\{1,\dots,n\}$.

Notice that, when $k=1$ we get back to Poisson equation, while $k=n$ carries out the well-known Monge-Amp\`ere equation.
For $k\geq 2$ the $S_k$ operator is fully nonlinear and it is not elliptic unless when restricted to a suitable class of
admissible functions, the so called {\em $k$-convex functions} (see Section 2 for more details).

The existence and uniqueness of a classical solution of \eqref{eq1}, when $\Omega$ is a {\em $(k-1)$-convex domain},
was proved by Caffarelli, Nirenberg and Spruck in the seminal paper \cite{cns}, while Wang \cite{W} considered problem \eqref{eqL}, proving
the existence of a (unique up to a scalar factor) $k$-convex solution $u\in C^\infty(\Omega)\cap C^{1,1}(\overline\Omega)$
for the eigenvalue
$$
\Lambda_k(\Omega)=\inf\left\{\frac{-\int_\Omega w\,S_k(D^2w)\,dx}{\int_\Omega|w|^{k+1}dx}\,:\,w\in\Phi_{k,0}(\Omega),
\,w<0\text{ in }\Omega\right\}\,,
$$
where $\Phi_{k,0}$ is the set of $k$-convex functions in $C^2(\Omega)\cap C(\overline\Omega)$, vanishing on $\partial\Omega$.
\medskip

Notice that for $k=2$ (which is the case we will deal with here) the appropriate class of sets where to solve
the Dirichlet problems \eqref{eqL} and \eqref{eq1}
is the class of $1$-convex sets, i.e. sets with positive mean curvature. For convenience, we will denote
by $\mathcal{C}_1^+$ {\em the class of convex subsets of $\R^n$ with boundary of class $C^{3,1}$ having everywhere
positive mean curvature} (equivalently, convex sets $\Omega$ with $\kappa_{n-1}>0$ at every boundary point, where
$\kappa_1(x)\leq\dots\leq\kappa_{n-1}(x)$ are the principal curvatures of $\partial\Omega$ at $x$).
\medskip

Convexity properties of solutions of \eqref{eqL} and \eqref{eq1}, in the case $k=2$, $n=3$, are treated in \cite{LiuMaXu, MaXu}; in these papers the authors use the constant rank technique.
\medskip

Here I give new proofs of the results of \cite{LiuMaXu} and \cite{MaXu} and prove some new results.
In particular, regarding the convexity of the solutions to problems \eqref{eq1} and
\eqref{eqL}, I give new proofs of the following two theorems.

\begin{thm}\label{ThmConvL}(\cite[Theorem 1.1]{LiuMaXu})
If $\Omega\subset\R^3$ is
a $\mathcal{C}_1^+$ set, $k=2$ and $u\in C^2(\Omega)\cap C(\overline\Omega)$ is an admissible solution of
\eqref{eqL}, then the function $v=-\log(-u)$ is convex.
\end{thm}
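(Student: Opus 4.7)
The plan is to implement the macroscopic convex-envelope approach of Alvarez--Lasry--Lions, but with the usual convex envelope replaced by the Minkowski (infimal) self-convolution of $v := -\log(-u)$. The key idea is that the logarithmic transformation turns the eigenvalue equation into an equation whose admissible solution is unique up to an additive constant; so if the Minkowski envelope of $v$ is again a solution (in the viscosity sense), uniqueness forces it to coincide with $v$, meaning $v$ is convex.

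First I would rewrite the PDE in terms of $v$. A direct calculation gives $D^2 u = -u\,(D^2 v - \nabla v\otimes\nabla v)$, and since $S_2$ is $2$-homogeneous, \eqref{eqL} is equivalent to
$$
S_2\bigl(D^2 v - \nabla v\otimes\nabla v\bigr) = \La(\Omega)\quad\text{in }\Omega, \qquad v\to+\infty\text{ at }\partial\Omega,
$$
admissibility of $u$ corresponding to $D^2 v - \nabla v\otimes\nabla v$ lying in the $\Gamma_2$-cone. For a fixed $\lambda\in(0,1)$ I would then set
$$
v_\lambda(x) := \inf\bigl\{(1-\lambda)v(y) + \lambda v(z) : y,z\in\Omega,\ x=(1-\lambda)y+\lambda z\bigr\},
$$
which by convexity of $\Omega$ defines a function on $\Omega$ with $v_\lambda\leq v$, $v_\lambda\to+\infty$ at $\partial\Omega$, and $v_\lambda\equiv v$ if and only if $v$ is convex.

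The heart of the proof is showing that $v_\lambda$ is a viscosity supersolution of the transformed equation. At a point $x=(1-\lambda)y+\lambda z$ attaining the infimum, the standard infimal-convolution calculus gives $\nabla v(y)=\nabla v(z)=\nabla v_\lambda(x)=:p$ and $D^2 v_\lambda(x) = \bigl[(1-\lambda)(D^2 v(y))^{-1} + \lambda(D^2 v(z))^{-1}\bigr]^{-1}$, so the required inequality reduces to the algebraic statement that, for positive-definite $3\times 3$ matrices $A,B$ and $p\in\R^3$ with $S_2(A-p\otimes p)=S_2(B-p\otimes p)=\La(\Omega)$,
$$
S_2\Bigl(\bigl[(1-\lambda)A^{-1}+\lambda B^{-1}\bigr]^{-1} - p\otimes p\Bigr)\leq\La(\Omega).
$$
This is the step where I expect the main difficulty. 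The argument presumably exploits the identity $S_2(M)=\det(M)\,\mathrm{tr}(M^{-1})$, special to $n=3$, which expresses the left-hand side in terms of the weighted arithmetic mean of $A^{-1},B^{-1}$, together with the Brunn--Minkowski-type inequality $\det\bigl((1-\lambda)A^{-1}+\lambda B^{-1}\bigr)^{-1/3}\geq (1-\lambda)\det(A)^{1/3}+\lambda\det(B)^{1/3}$ and the concavity of $S_2^{1/2}$ on $\Gamma_2$; a careful treatment of the rank-one perturbation $p\otimes p$ will be required, and this appears to be the reason the theorem is limited to $n=3$, $k=2$.

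Once the supersolution property is in hand, $u_\lambda := -e^{-v_\lambda}$ is an admissible competitor in the Rayleigh-type quotient defining $\La(\Omega)$; by uniqueness of the admissible eigenfunction up to a positive scalar multiple, $u_\lambda$ must be proportional to $u$, hence $v_\lambda = v + c_\lambda$ for some constant $c_\lambda$. Since $v_\lambda \leq v$ and both functions blow up at $\partial\Omega$, necessarily $c_\lambda = 0$, so $v_\lambda \equiv v$ for every $\lambda \in (0,1)$, which is precisely the convexity of $v$.
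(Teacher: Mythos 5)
Your overall plan coincides with the paper's: transform $u$ to $v=-\log(-u)$, form a Minkowski self-combination of $v$, prove it is a viscosity supersolution of the transformed equation, and conclude via the variational characterization of $\La(\Omega)$. However, there are three substantive gaps.

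First, and most seriously, the algebraic inequality you identify as ``the heart of the proof'' is not actually established. The paper does not reason via the identity $S_2(M)=\det(M)\,\mathrm{tr}(M^{-1})$ and determinant concavity; it invokes a specific concavity lemma (here Lemma \ref{lemmaLMX}, which is \cite[Proposition 4.3]{LiuMaXu}): for a fixed $P\in\mathcal{S}_3^+$, both $A\mapsto S_1(PA^{-1})/S_2(A^{-1})$ and $A\mapsto S_1(PA^{-1})^{-1}$ are concave on $\mathcal{S}_3^{++}$. The required inequality
$S_2(\bar A)-S_1(P\bar A)\le\Lambda_2$ (equivalently your formulation with $S_2(\bar A-p\otimes p)$, since $S_2(A-p\otimes p)=S_2(A)-S_1(PA)$ with $P=|p|^2 I-p\otimes p$) is then deduced from these two concavities and an elementary scalar convexity. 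Your sketch, as written, is a plausible heuristic for why something of this kind should hold in $n=3$, $k=2$, but it is not a proof, and the route you suggest would still have to reproduce precisely this concavity lemma or an equivalent; it is the genuinely nontrivial input.

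Second, you replace the convex envelope by a two-point inf-convolution $v_\lambda$ for a \emph{fixed} $\lambda$. The paper works with the full convex envelope $\tilde v$, which is automatically convex. This matters for your concluding step: you treat $u_\lambda=-e^{-v_\lambda}$ as ``an admissible competitor in the Rayleigh-type quotient,'' but admissibility requires $u_\lambda\in\Phi_{2,0}$, i.e.\ $D^2 v_\lambda-\nabla v_\lambda\otimes\nabla v_\lambda\in\Gamma_2$ everywhere. Your $v_\lambda$ is not convex in general, and the cone $\Gamma_2$ is not downward monotone under $\le$, so this admissibility does not follow from what you have. (Note also that the conclusion is not by ``uniqueness of the eigenfunction'' directly; it is that the Rayleigh quotient of the competitor is $\le\La(\Omega)$, hence equal, hence the competitor is a scalar multiple of $u$; then the equality of minima over $\overline\Omega$ fixes the scalar to be $1$.)

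Third, the infimal-convolution Hessian formula you use requires $D^2 v>0$ at the contact points, whereas one only has $D^2 v\ge 0$ there. The paper handles this by a regularization $v_{i,\varepsilon}=v_i+\varepsilon|x|^2/2$ and a limiting argument; your proposal omits this, and without it the crucial Hessian identity $D^2 v_\lambda=\bigl[(1-\lambda)(D^2 v(y))^{-1}+\lambda(D^2 v(z))^{-1}\bigr]^{-1}$ is not available.
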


\begin{thm}\label{ThmConv1}(\cite[Theorem 2]{MaXu})
If $\Omega\subset\R^3$ is a $\mathcal{C}_1^+$ set, $k=2$ and $u$ is the admissible classical solution of \eqref{eq1},
then the function $v=-\sqrt{-u}$ is convex.
\end{thm}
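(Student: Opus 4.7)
The plan is to show, via the Minkowski sum of $v$ with itself, that $v$ coincides with its self inf-convolution, which is equivalent to convexity, and to derive this coincidence from the comparison principle applied to a supersolution of the equation for $u$ built from the Minkowski sum.

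First I would transform the equation. Writing $u=-v^2$ one computes $D^2u=-2vD^2v-2\,\nabla v\otimes\nabla v$; expanding $S_2$ via $S_2(A)=\tfrac12[(\operatorname{tr} A)^2-\operatorname{tr}(A^2)]$ and using the identity $(\operatorname{tr} A)|p|^2-\langle Ap,p\rangle=\langle T_1(A)p,p\rangle$ with $T_1(A):=(\operatorname{tr} A)I-A$, the equation $S_2(D^2u)=1$ becomes
\[
\mathcal{F}[v]\ :=\ 4v^2 S_2(D^2v)+4v\,\langle T_1(D^2v)\,\nabla v,\,\nabla v\rangle\ =\ 1\quad \text{in }\Omega,
\]
with $v<0$ in $\Omega$ and $v=0$ on $\partial\Omega$.

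Next, for $\lambda\in(0,1)$, I would define the Minkowski sum of $v$ with itself,
\[
v_\lambda(x)\ :=\ \inf\bigl\{(1-\lambda)v(y)+\lambda v(z):\ y,z\in\overline{\Omega},\ (1-\lambda)y+\lambda z=x\bigr\},
\]
on $\Omega=(1-\lambda)\Omega+\lambda\Omega$. The trivial decomposition $y=z=x$ gives $v_\lambda\leq v$, so the function $w:=-v_\lambda^2$ satisfies $w\leq u$ in $\Omega$. Since $\Omega\in\mathcal{C}_1^+$ is, up to a standard approximation by strictly convex $\mathcal{C}_1^+$ domains, strictly convex, boundary points are exposed and $v_\lambda=0$ on $\partial\Omega$, so that $w=u=0$ on $\partial\Omega$.

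The technical heart of the proof is to show that $w$ is a viscosity supersolution of $S_2(D^2w)=1$. At a point where $v_\lambda$ is $C^2$ and the infimum is attained at an interior pair $(y,z)$, the first-order conditions yield $\nabla v(y)=\nabla v(z)=\nabla v_\lambda(x)=:p$, while minimizing the constrained second-order quadratic produces the harmonic mean
\[
D^2v_\lambda(x)\ =\ \bigl((1-\lambda)D^2v(y)^{-1}+\lambda D^2v(z)^{-1}\bigr)^{-1}.
\]
Using that $\mathcal{F}=1$ at $y$ and $z$ and a Minkowski-type concavity of the operator $\mathcal{F}$ along harmonic means of Hessians, one verifies $\mathcal{F}[v_\lambda](x)\leq 1$, equivalently $S_2(D^2w)(x)\leq 1$. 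The general viscosity statement follows by touching $w$ from below with $C^2$ test functions and using the ellipticity of $S_2$ on the $2$-admissible cone.

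Finally, the comparison principle for admissible solutions of the $2$-Hessian equation (see \cite{cns}) applied to the supersolution $w$ and the solution $u$ with matching boundary data gives $w\geq u$ in $\Omega$; combined with $w\leq u$, this yields $v_\lambda\equiv v$. The infimum defining $v_\lambda$ is therefore attained at the trivial decomposition for every $\lambda\in(0,1)$, which is precisely the convexity of $v$.

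The main obstacle is the algebraic Minkowski-concavity inequality $\mathcal{F}[v_\lambda]\leq 1$ at points where $D^2v_\lambda$ is a harmonic mean: it reduces to a delicate comparison of $S_2$ and $T_1$ at a weighted harmonic mean of matrices versus at the matrices themselves, with the extra coupling coming from the $v^2$ and $v$ coefficients of $\mathcal{F}$, and it is precisely here that the structural hypothesis $n=3$, $k=2$ is expected to play an essential role.
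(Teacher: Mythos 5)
Your proposal follows the same route as the paper's proof: transform to the $v$-equation (your $\langle T_1(D^2v)\nabla v,\nabla v\rangle$ is exactly the paper's $S_1(P(Dv)D^2v)$), form a Minkowski self-combination of $v$, use the harmonic-mean formula for the Hessian of an inf-convolution together with an algebraic concavity property of the operator to obtain a viscosity supersolution, and close with the comparison principle; the gradient blow-up of $v$ at $\partial\Omega$ needed to guarantee interior minimizing pairs is also invoked by the paper, via Hopf's lemma. The only deviation is cosmetic: you fix $\lambda$ and take the two-point self-inf-convolution $v_\lambda$, whereas the paper works with the convex envelope $\tilde v$ (infimum over all $(n+1)$-point decompositions and weights $t\in\Upsilon_3$); both reduce to the same algebraic inequality. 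The ``Minkowski-type concavity of $\mathcal F$'' you flag as the main obstacle is precisely what the paper supplies by combining Lemma \ref{lemmaLMX} (concavity in $\mathcal{S}_3^{++}$ of $S_1(PA^{-1})/S_2(A^{-1})$ and of $S_1(PA^{-1})^{-1}$, from \cite{LiuMaXu}) with case (iii), $\alpha=2$, of \cite[Lemma A.1]{IS}, which yields convexity of $(t,A)\mapsto t^2\,S_2(A^{-1})/S_1(P(p)A^{-1})$; and that is indeed where the structural restriction $n=3$, $k=2$ enters.
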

It is also possible to generalize the previous theorems to the solutions of the problem
\begin{equation}\label{eqp}
\left\{\begin{array}{ll}
S_2(D^2u)=\lambda\,(-u)^p\quad&\text{in }\Omega\,,\\
u=0\quad&\text{on }\partial\Omega\,,\\
u<0\quad&\text{in }\Omega\,.
\end{array}
\right.
\end{equation}
The existence of a solution to \eqref{eqp} for every $p\in(0,2)\cup(2,+\infty)$ and $\lambda>0$ is proved in \cite{CW1}, see also \cite{CW2, W2}
(notice that $k=2>n/2=3/2$).
In Section 8 I prove the following theorem, together with some other connected results.
\begin{thm}\label{ThmConvp}
Let $p\in(0,2)$, $\lambda>0$. If $\Omega$ is a  $\mathcal{C}_1^+$ subset of $\R^3$, then there exists an admissible classical solution $u$ of
\eqref{eqp} such that the function $v=-(-u)^{(2-p)/4}$ is convex.
\end{thm}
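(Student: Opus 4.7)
The plan is to mirror the macroscopic Minkowski-addition argument used for Theorems~\ref{ThmConvL} and~\ref{ThmConv1}, with the exponent $\alpha:=(2-p)/4$ replacing the logarithm (case $p\to 2$) and the square root (case $p=0$), respectively. Convexity of $v=-(-u)^{\alpha}$ is equivalent to $\alpha$-concavity of $-u$ on $\Omega$. This $\alpha$ is the reciprocal of the scaling exponent $\gamma:=2k/(k-p)=4/(2-p)$ under which \eqref{eqp} is invariant with $k=2$, $n=3$ (the map $u\mapsto\tau^{\gamma}u(\cdot/\tau)$ sends admissible solutions on $\Omega$ to admissible solutions on $\tau\Omega$), and it interpolates between $\alpha=1/2$ (case $p=0$, Theorem~\ref{ThmConv1}) and $\alpha\to 0$ (limit $p\to 2$, Theorem~\ref{ThmConvL}).

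\textbf{Step 1 (Minkowski $\alpha$-combination).} Given $\Omega_0,\Omega_1\in\mathcal{C}_1^+$, pick admissible classical solutions $u_0,u_1$ of \eqref{eqp} on the respective domains (existence from \cite{CW1}). Set $\Omega_t:=(1-t)\Omega_0+t\Omega_1\in\mathcal{C}_1^+$ and define
\[
-u_t(z):=\sup\Bigl\{\bigl[(1-t)(-u_0(x))^{\alpha}+t(-u_1(y))^{\alpha}\bigr]^{1/\alpha}\,:\,(1-t)x+ty=z,\ x\in\Omega_0,\ y\in\Omega_1\Bigr\}
\]
for $z\in\Omega_t$. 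By construction $-u_t$ is $\alpha$-concave on $\Omega_t$, $u_t=0$ on $\partial\Omega_t$, and $u_t<0$ in $\Omega_t$.

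\textbf{Step 2 (Brunn--Minkowski inequality for $S_2$).} The core technical step is to show that $u_t$ is admissible (in the $2$-convex cone) and is a viscosity supersolution of \eqref{eqp} on $\Omega_t$, i.e.\
\[
S_2(D^2u_t)\ge\lambda(-u_t)^{p}\qquad\text{in }\Omega_t.
\]
This is a Brunn--Minkowski / Borell--Brascamp--Lieb-type inequality for the fully nonlinear operator $S_2$ in $\R^3$ under $\alpha$-Minkowski combination of admissible functions vanishing on the boundary of $\mathcal{C}_1^+$ sets, and it is the direct analogue, for the exponent $\alpha=(2-p)/4$, of the inequalities used to prove Theorems~\ref{ThmConvL} and~\ref{ThmConv1}.

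\textbf{Step 3 (Comparison and conclusion).} The comparison principle for admissible sub- and supersolutions of~\eqref{eqp} on $\mathcal{C}_1^+$ domains (cf.~\cite{cns,W,CW1}), together with $u_t|_{\partial\Omega_t}=0$, yields $u_t\ge\tilde u$ for every admissible classical solution $\tilde u$ of \eqref{eqp} on $\Omega_t$. Apply this with $\Omega_0=\Omega_1=\Omega$ and $u_0=u_1=u_\ast$ a fixed admissible classical solution of \eqref{eqp} (from~\cite{CW1}): then $\Omega_t=\Omega$, and on one hand $u_t\ge u_\ast$ by comparison, while on the other hand $u_t\le u_\ast$ because the choice $x=y=z$ is admissible in the defining supremum, so $-u_t(z)\ge -u_\ast(z)$. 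Hence $u_t\equiv u_\ast$ on $\Omega$, so $-u_\ast$ is $\alpha$-concave and $v=-(-u_\ast)^{\alpha}$ is convex.

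The main obstacle is Step~2: verifying that the $\alpha$-Minkowski envelope of admissible functions vanishing on the boundary of $\mathcal{C}_1^+$ sets in $\R^3$ is again admissible and satisfies the above reverse inequality for $S_2$. It relies on the $C^{3,1}$ regularity and the strict positivity of the mean curvature built into the definition of $\mathcal{C}_1^+$, on the $C^{1,1}$ regularity of $u_0,u_1$ up to the boundary guaranteed by~\cite{cns}, and on the $2$-homogeneity of $S_2$ in $D^2u$; the exponent $\alpha=(2-p)/4$ is precisely the one for which these ingredients combine into the inequality above, and that is why the argument is confined to the range $p\in(0,2)$ and to dimension $n=3$.
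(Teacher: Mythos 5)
Your plan correctly identifies the two structural ingredients the paper uses — (i) an envelope/Minkowski-combination construction for the power transform $v=-(-u)^{(2-p)/4}$ and (ii) the fact that the transformed equation satisfies the right algebraic concavity so that the envelope is a supersolution — and your Step~2 is indeed the step the paper carries out via Lemma~\ref{lemmaLMX} combined with \cite[Lemma A.1]{IS}. (As a side remark, the sign in your Step~2 is backwards: the $\alpha$-Minkowski combination should satisfy $S_2(D^2u_t)\le\lambda(-u_t)^p$, since it is built to be a \emph{supersolution}.)

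The genuine gap is Step~3. You close the argument by invoking a comparison principle for \eqref{eqp} to conclude $u_t\ge u_\ast$, but \eqref{eqp} does not obey the standard comparison principle in the range $p\in(0,2)$: writing the equation as $G(D^2u,u):=S_2(D^2u)-\lambda(-u)^p=0$, one has $\partial_u G=\lambda p(-u)^{p-1}>0$, i.e.\ the zeroth-order term is \emph{increasing} in $u$, which is the improper monotonicity for viscosity comparison. This is precisely why the paper does \emph{not} use comparison here (contrast with Theorem~\ref{ThmConv1}, where the right-hand side is the constant $1$ and comparison is legitimate). Instead, the paper exploits the variational characterization from \cite{CW1}: a solution of \eqref{eqp} minimizes the Rayleigh-type quotient $Q_{2,p}$ over $\Phi_{2,0}(\Omega)$, and one shows that the function $\tilde u=-(-\tilde v)^{4/(2-p)}$ built from the convex envelope $\tilde v$ of $v$ is admissible, satisfies $S_2(D^2\tilde u)\le(-\tilde u)^p$, and hence (multiplying by $-\tilde u$, integrating, and using $|\tilde u|\ge|u|$ and $p<2$) achieves $Q_{2,p}(\tilde u)\le Q_{2,p}(u)=\min Q_{2,p}$. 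Therefore $\tilde u$ is itself a minimizer and thus a solution, and by construction $-(-\tilde u)^{(2-p)/4}=\tilde v$ is convex. Note also that the paper only asserts \emph{existence} of a solution with the convexity property (namely $\tilde u$), not that an arbitrary given solution $u_\ast$ has it; your Step~3 is trying to prove more than is claimed, and does so by a principle that fails. To repair your proof you should replace the comparison argument in Step~3 by the $Q_{2,p}$-minimization argument.
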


In \cite{LiuMaXu} it is also proved a Brunn-Minkowski inequality for $\Lambda_2(\Omega)$ in $\R^3$,
which is in fact the main result of that paper.
Brunn-Minkowski type inequalities for variational functionals and the convexity properties of the solutions to the
related Dirichlet problems are strongly connected, as showed here and in \cite{SaLectNotes}.
Here I prove a Brunn-Minkowski inequality (in the case $n=3$, $k=2$) for the variational functional $\tau_k$
related to problem \eqref{eq1} and defined as follows
\begin{equation}\label{deftauk}
\frac{1}{\tau_k(\Omega)}=\inf\left\{\frac{-\int_\Omega w\,S_k(D^2w)\,dx}{(\int_\Omega|w|\,dx)^{k+1}}\,:\,w\in\Phi_{k,0}(\Omega)\right\}\,.
\end{equation}
Notice that, $S_1(D^2u)=\Delta u$, then $\Lambda_1(\Omega)$ is the first Dirichlet eigenvalue of the Laplacian and
$\tau_1(\Omega)$ is the so called {\em torsional rigidity} of $\Omega$, then we will refer to $\tau_k(\Omega)$ as the
{\em $k$-torsional rigidity} of $\Omega$. I recall that the Brunn-Minkowski inequality for $\Lambda_1$ is a classical result by Brascamp and Lieb \cite{bl}, while the Brunn-Minkowski inequality for $\tau_1$ was proved by Borell in \cite{b2}.
Moreover $S_n(D^2u)=\det(Du)$, then $\Lambda_n$ is the Dirichlet eigenvalue of the Monge-Amp\`ere operator and the
related Brunn-Minkowski inequality was proved in \cite{Salani}; a proof of the Brunn-Minkowski inequality for $\tau_n$
is obtained in \cite{Hart} and it can also follow from the same \cite{Salani}.
More references and more details about Brunn-Minkowski inequalities for variational functionals are presented in Section 2.

Precisely, here I prove the following.
\begin{thm}\label{ThmBM1}
Let $\Omega_0$ and $\Omega_1$ be $\mathcal{C}_1^+$ subsets of $\R^3$ and $t\in[0,1]$.
Then
\begin{equation}\label{BM1}
\tau_2((1-t)\Omega_0+t\Omega_1)^{\frac{1}{10}}\geq (1-t)\tau_2(\Omega_0)^{\frac{1}{10}}+
t\tau_2(\Omega_1)^{\frac{1}{10}}\,.
\end{equation}
Moreover, equality holds in \eqref{BM1} if and only if $\Omega_0$ and $\Omega_1$ are homothetic.
\end{thm}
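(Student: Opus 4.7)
The plan adapts Borell's proof of the Brunn--Minkowski inequality for the torsional rigidity $\tau_1$ to the $2$-Hessian setting, using Theorem \ref{ThmConv1} as the essential convexity input. Let $u_i$ $(i=0,1)$ be the admissible classical solution of \eqref{eq1} on $\Omega_i$, and set $v_i := -\sqrt{-u_i}$; by Theorem \ref{ThmConv1}, each $v_i$ is convex on $\Omega_i$, vanishes on $\partial\Omega_i$, and is negative in the interior. I then form the \emph{infimal convolution}
\[
v_t(x) := \inf\Big\{(1-t)\,v_0(x_0) + t\,v_1(x_1) \,:\, (1-t)\,x_0 + t\,x_1 = x,\ x_j\in\Omega_j\Big\}, \qquad x\in \Omega_t,
\]
which is convex on $\Omega_t$, vanishes on $\partial\Omega_t$, is negative inside, and whose sublevel sets satisfy the Minkowski identity $\{v_t \le -s\} = (1-t)\{v_0\le -s\} + t\{v_1\le -s\}$ for every $s>0$.

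The candidate admissible test function on $\Omega_t$ is $\tilde u_t := -v_t^{\,2}$, and the analytic heart of the proof is the pointwise (a.e.) super-solution bound
\[
S_2(D^2 \tilde u_t) \le 1 \qquad \text{on } \Omega_t.
\]
Using the Alexandrov second-order differentiability of the convex $v_t$ and the standard optimality calculus for infimal convolutions, one obtains, at a point $x$ with optimizers $(x_0,x_1)$, the relations $\nabla v_t(x) = \nabla v_0(x_0) = \nabla v_1(x_1) =: p$ and $(D^2 v_t(x))^{-1} = (1-t)(D^2 v_0(x_0))^{-1} + t(D^2 v_1(x_1))^{-1}$. From the formula $D^2\tilde u_t(x) = -2 v_t(x)\,D^2 v_t(x) - 2 p\otimes p$, together with the analogous expressions for $D^2 u_i(x_i)$ coupled with $S_2(D^2 u_i(x_i))=1$, the required bound reduces to a purely algebraic Brunn--Minkowski-type inequality for $S_2$ on the admissible cone $\Gamma_2\subset\mathrm{Sym}(3)$: the concavity of $S_2^{1/2}$ on $\Gamma_2$, combined with the fact that all three Hessians differ only through the PSD piece $-2v_i A_i$ (while sharing the common rank-one perturbation $-2p\otimes p$), delivers the inequality in dimension three. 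I expect this $3\times 3$ matrix manipulation to be the main obstacle of the proof.

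Granted the super-solution estimate (with the customary mollification if $\tilde u_t$ is not classically in $\Phi_{2,0}$), plugging $\tilde u_t$ into the variational characterization \eqref{deftauk} gives
\[
\tau_2(\Omega_t) \;\ge\; \frac{\bigl(\int_{\Omega_t} v_t^2 \,dx\bigr)^3}{-\int_{\Omega_t}\! \tilde u_t\,S_2(D^2\tilde u_t)\,dx} \;\ge\; \Bigl(\int_{\Omega_t} v_t^2 \,dx\Bigr)^2.
\]
On the other hand, the defining inequality of the infimal convolution yields $v_t(x)^2 \ge [(1-t)(-v_0(x_0)) + t(-v_1(x_1))]^2$ whenever $x = (1-t)x_0 + tx_1$, which is exactly the $M_{1/2}$-concavity hypothesis of Borell--Brascamp--Lieb. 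The BBL theorem in $\R^3$ (with $p=1/2 \ge -1/3$, transported index $p/(1+3p) = 1/5$) then delivers
\[
\Bigl(\int_{\Omega_t} v_t^2 \,dx\Bigr)^{1/5} \;\ge\; (1-t)\Bigl(\int_{\Omega_0} v_0^2 \,dx\Bigr)^{1/5} + t\Bigl(\int_{\Omega_1} v_1^2 \,dx\Bigr)^{1/5}.
\]
Since testing $w = u_i$ in \eqref{deftauk} shows $\tau_2(\Omega_i) = (\int_{\Omega_i}(-u_i)\,dx)^2 = (\int_{\Omega_i} v_i^2\,dx)^2$, raising the two displays to the appropriate power and combining produces exactly $\tau_2(\Omega_t)^{1/10} \ge (1-t)\tau_2(\Omega_0)^{1/10} + t\tau_2(\Omega_1)^{1/10}$. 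The equality case is obtained by tracing equality through BBL (whose rigidity forces the radial profiles of $v_0^2$ and $v_1^2$ to coincide up to dilation) and through the equality case of the matrix inequality of Step 2, ultimately forcing $\Omega_0$ and $\Omega_1$ to be homothetic.
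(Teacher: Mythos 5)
Your overall architecture matches the paper's: build $\tilde u_t=-v_t^2$ via the infimal convolution of $v_0=-\sqrt{-u_0}$ and $v_1=-\sqrt{-u_1}$, prove $\tilde u_t$ is a supersolution on $\Omega_t$, and feed the resulting pointwise comparison into a functional (Prékopa-Leindler/BBL) inequality. Two points are worth flagging.

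First, a genuine variation: you apply Borell--Brascamp--Lieb with exponent $p=1/2$ (transported to $1/5$ in $\R^3$), reading off $\tau_2(\Omega)^{1/10}=(\int_\Omega v^2)^{1/5}$ directly, whereas the paper downgrades the $M_{1/2}$-concavity to log-concavity via AM--GM, applies Pr\'ekopa--Leindler, and then upgrades back to power $1/10$ using the degree-$10$ homogeneity of $\tau_2$. Your route is slightly shorter and avoids the homogeneity step, but requires the ``support version'' of BBL (hypothesis needed only where $f(x)g(y)>0$): for $p>0$, $M_p(0,b;t)>0$ when $b>0$, so the ``for all $x,y\in\R^n$'' formulation would force $v_t^2$ to be positive outside $\Omega_t$, which is false. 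The paper's $p=0$ route dodges this automatically because $M_0(0,b;t)=0$; you should cite a BBL statement that explicitly restricts the hypothesis to $f(x)g(y)>0$.

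Second, and more seriously, the step you yourself flag as ``the main obstacle'' is sketched incorrectly. Concavity of $S_2^{1/2}$ on $\Gamma_2$ would be the right tool if $D^2\tilde u_t(\bar x)$ were an \emph{arithmetic} convex combination of $D^2u_0(x_0)$ and $D^2u_1(x_1)$. It is not: the inf-convolution gives the \emph{harmonic} relation $(D^2v_t)^{-1}=(1-t)(D^2v_0)^{-1}+t(D^2v_1)^{-1}$, while the scalar $v_t=(1-t)v_0+tv_1$ combines arithmetically, and the rank-one term $-2p\otimes p$ is common. This mixed arithmetic/harmonic structure plus the rank-one perturbation is precisely why the paper invokes Lemma~\ref{lemmaLMX} (concavity of $A\mapsto S_1(PA^{-1})/S_2(A^{-1})$ and of $A\mapsto S_1(PA^{-1})^{-1}$ on $\mathcal S_3^{++}$, from Liu--Ma--Xu) and the joint convexity of $(s,A)\mapsto s^2\,S_2(A^{-1})/S_1(PA^{-1})$ obtained by combining it with \cite[Lemma A.1(iii)]{IS}. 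Rewriting the equation for $v$ as
\begin{equation*}
\frac{v^2}{f(B)}+v-\tfrac14 g(B)=0,\qquad B=(D^2v)^{-1},\quad f(B)=\frac{S_1(PB^{-1})}{S_2(B^{-1})},\quad g(B)=S_1(PB^{-1})^{-1},
\end{equation*}
the supersolution bound is exactly the convexity of the left-hand side in $(v,B)$, and concavity of $S_2^{1/2}$ alone gives no handle on the $S_1(P\,\cdot)$ terms. So your route to Theorem~\ref{ThmBM1} would still need this lemma; the $S_2^{1/2}$-concavity argument, as proposed, does not close the gap.
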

Equivalently, the previous theorem asserts that the operator $\tau_2^{\frac{1}{10}}$ is concave with respect to Minkowski
addition in the class of $\mathcal{C}_1^+$ sets in $\R^3$.
\medskip

In Section 8 it is also proved a generalization of the above theorem to functionals related to problem \eqref{eqp},
namely Theorem \ref{ThmBMp}.
\medskip

Notice that, following a standard procedure implemented in \cite{BiaSal} for the Bernoulli constant, the Brunn-Minkowski
inequalities for $\Lambda_2$ and $\tau_2$ leads to the following Urysohn's inequalities
\begin{equation}\label{UryL}
\Lambda_2(\Omega)\geq\Lambda_2(\Omega^\sharp)
\end{equation}
and
\begin{equation}\label{Ury1}
\tau_2(\Omega)\leq\tau_2(\Omega^\sharp)\,,
\end{equation}
where $\Omega^\sharp$ is a ball with the same mean-width of $\Omega$; moreover, equality holds in any one
of the above inequalities if and only if $\Omega=\Omega^\sharp$.
On the other hand, thanks to Theorem \ref{ThmConvL} and Theorem \ref{ThmConv1}, implying the convexity of the level sets of
the solutions of \eqref{eqL} and \eqref{eq1} for $k=2$ and $n=3$, it is now possible to carry on a
symmetrization by quermassintegrals (precisely by surface measure) argument from \cite{Trudi, Tso} and to give a
complete proof
of the following inequalities:
\begin{equation}\label{IsoL}
\Lambda_2(\Omega)\geq\Lambda_2(\Omega^*)
\end{equation}
and
\begin{equation}\label{Iso1}
\tau_2(\Omega)\leq\tau_2(\Omega^*)\,,
\end{equation}
where $\Omega^*$ is a ball with the same surface area as $\Omega$.

Notice that \eqref{IsoL} and \eqref{Iso1} are better than (in the sense that they imply) \eqref{UryL} and \eqref{Ury1},
respectively,
due to classical isoperimetric inequalities for quermassintegrals which imply that $\Omega^*\subset\Omega^\sharp$.
See Section 8 for details.
\bigskip

The basic technique here adopted to obtain all these results is in some sense a refinement of the technique of \cite{ALL} and
it is based on the Minkowski addition of convex functions (that is nothing else than the classical infimal convolution operation),
which provides a sort of continuous rearrangement and permits then to obtain results that are typical of rearrangement's techniques,
like the following.
\begin{thm}\label{ThmUrysohn}
Let $\Omega\subset\R^3$ be a $\mathcal{C}_1^+$ set and let $\Omega^\sharp$ be a ball with the same mean-width of $\Omega$.
Denote by $u$ the solution of \eqref{eq1} in $\Omega$ and by $u^\sharp$
the solution in $\Omega^\sharp$. Then
\begin{equation}\label{ineqlpnorm}
\|u\|_{L^p(\Omega)}\leq \|u^\sharp\|_{L^p(\Omega^\sharp)}\quad\text{for every }p\in(0,+\infty]\,.
\end{equation}
Moreover, equality holds for any $p\in(0,+\infty)$ if and only if $\Omega$ is a ball.
\end{thm}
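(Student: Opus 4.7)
The plan is to combine the convexity of $v=-\sqrt{-u}$ given by Theorem~\ref{ThmConv1} with a Minkowski-addition (infimal convolution) construction and a rotational averaging argument, producing a distribution-function inequality that simultaneously controls every $L^p$-norm.

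Fix a rotation $R$ of $\R^3$ and set $\Omega_0=\Omega$, $\Omega_1=R\Omega$, with $u_0,u_1$ the corresponding solutions of~\eqref{eq1}. By Theorem~\ref{ThmConv1} the functions $v_i:=-\sqrt{-u_i}$ are convex on $\overline{\Omega_i}$, vanish on $\partial\Omega_i$ and are negative inside. For $t\in[0,1]$ let
\[
v_t(z):=\inf\bigl\{(1-t)v_0(x)+tv_1(y):(1-t)x+ty=z\bigr\}
\]
be the infimal convolution with weights $(1-t)$ and $t$; this is a convex function on $\Omega_t:=(1-t)\Omega_0+t\Omega_1$ vanishing on $\partial\Omega_t$. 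Setting $w_t:=-v_t^2$, the identity $\{v_t\le-r\}=(1-t)\{v_0\le-r\}+t\{v_1\le-r\}$ together with the Brunn-Minkowski inequality in $\R^3$ yields
\[
|\{|w_t|\ge s\}|^{1/3}\ge(1-t)|\{|u_0|\ge s\}|^{1/3}+t|\{|u_1|\ge s\}|^{1/3}.
\]
Since $\Omega_1=R\Omega_0$ gives $|\{|u_1|\ge s\}|=|\{|u_0|\ge s\}|$, the distribution function of $|w_t|$ dominates that of $|u_0|$, and the layer-cake formula implies $\|w_t\|_{L^p(\Omega_t)}\ge\|u_0\|_{L^p(\Omega_0)}$ for every $p\in(0,\infty]$.

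The central technical step, and where I expect the main difficulty, is to show that $w_t$ is an admissible $2$-convex subsolution of~\eqref{eq1}, i.e.\ $S_2(D^2 w_t)\ge 1$ in $\Omega_t$. This is precisely the pointwise inequality underpinning Theorem~\ref{ThmBM1}: the harmonic-mean-like behaviour of $D^2 v_t$ at the optimal splitting of the infimal convolution, together with the identity expressing $S_2(D^2(-v^2))$ in terms of $v$, $Dv$ and $D^2 v$, transfers the equations $S_2(D^2 u_i)=1$ into the inequality $S_2(D^2 w_t)\ge 1$. Granting this, the comparison principle for the $2$-Hessian in the $2$-admissible class on $\mathcal{C}_1^+$ domains gives $w_t\ge u_t^\ast$ pointwise on $\Omega_t$, where $u_t^\ast$ is the solution of~\eqref{eq1} on $\Omega_t$, and hence
\[
\|u_t^\ast\|_{L^p(\Omega_t)}\ge\|w_t\|_{L^p(\Omega_t)}\ge\|u_0\|_{L^p(\Omega_0)}\quad\text{for every }p\in(0,\infty].
\]

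To conclude, I iterate this construction along a countable dense sequence of rotations of $\R^3$, following the averaging procedure of~\cite{BiaSal}: the mean-width is linear under Minkowski addition and rotation invariant, hence preserved at each step, and Blaschke's selection theorem produces a subsequence of $\mathcal{C}_1^+$ convex bodies whose Hausdorff limit is rotationally symmetric with the same mean-width as $\Omega$, namely $\Omega^\sharp$. Stability of solutions of~\eqref{eq1} under Hausdorff convergence of $\mathcal{C}_1^+$ domains then passes the $L^p$ bound to the limit and yields~\eqref{ineqlpnorm}. For the equality statement, if $\Omega$ is not a ball then one can find a rotation $R$ such that some sublevel set $\{u_0\le-s\}$ and its $R$-rotate are not homothetic, and the equality case of the Brunn-Minkowski inequality makes the distribution-function bound strict on a positive-measure set of levels $s$, giving strict inequality in~\eqref{ineqlpnorm} for every $p\in(0,\infty)$.
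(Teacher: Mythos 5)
Your plan follows the same spine as the paper's argument (convexity of $-\sqrt{-u}$, infimal convolution with rotated copies, supersolution comparison, pass to a ball), but there are two real issues and one genuinely different technical device. The different device is harmless and even pleasant: where the paper proves the $L^p$-norm bound by applying the Pr\'ekopa--Leindler inequality to $|\tilde u_N|^p$, you derive a distribution-function domination via the classical Brunn--Minkowski inequality on sublevel sets and then use layer cake. That works, except that your claimed identity $\{v_t\le -r\}=(1-t)\{v_0\le -r\}+t\{v_1\le -r\}$ is false (the Minkowski sum of epigraphs does not commute with horizontal slicing); only the inclusion $\supset$ holds, since if $v_i(x_i)\le -r$ then $v_t((1-t)x_0+tx_1)\le (1-t)v_0(x_0)+tv_1(x_1)\le -r$. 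Fortunately the inclusion is all you use, so the displayed volume inequality is still correct, but the statement should be fixed. Also a sign slip: $w_t$ is a \emph{super}solution, $S_2(D^2 w_t)\le 1$ (that is the content of Theorem \ref{ThmBMRearr}); a subsolution $S_2\ge 1$ would give $w_t\le u_t^*$, the wrong direction. Your subsequent conclusion $w_t\ge u_t^*$ only follows from the supersolution version.

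The serious gap is the passage to $\Omega^\sharp$. Iterating pairwise Minkowski combinations $K\mapsto (1-t)K+tRK$ over a dense set of rotations and invoking Blaschke selection gives you a subsequential Hausdorff limit that is some convex body of the same mean width; nothing in that argument forces the limit to be rotationally symmetric, and the claim that it is ``rotationally symmetric, namely $\Omega^\sharp$'' is unjustified as stated. The paper avoids this entirely by appealing to Hadwiger's Theorem (Schneider, Section 3.3): there exists a single sequence of rotations $\rho_0,\rho_1,\dots$ for which the \emph{one-shot} averages $\Omega_N=\frac1{N+1}(\rho_0\Omega+\dots+\rho_N\Omega)$ converge to the mean-width ball. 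The paper then takes the $(N+1)$-fold infimal convolution of the rotated copies of $v$, uses the $N$-ary extension of Lemma \ref{lemmaconvolu} and Theorem \ref{ThmBMRearr} to get a supersolution in $\Omega_N$, and applies Pr\'ekopa--Leindler once. If you replace your iterative scheme by this direct $N$-fold average the rest of your argument (with the inclusion fixed) goes through; without Hadwiger or some substitute, the convergence to $\Omega^\sharp$ is not established. Finally, for the equality case the paper reads it off from the Pr\'ekopa--Leindler equality conditions, which force all $\rho_i\Omega$ to be homothetic and hence equal; your route via a rotation that renders some sublevel set non-homothetic to its rotate would also need an argument that such a rotation exists whenever $\Omega$ is not a ball, and then a check that the strictness survives the limit.
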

This result is completely
new, to my knowledge; however it should be compared with the results of \cite{Trudi} and \cite{Tso}, see Section 8.
\bigskip

In any case, in my opinion, the most important novelty of this paper is probably not in the new results here contained, but
in that it presents a technique which allows to unify the proof of convexity properties of solutions of Dirichlet problems
(Theorems \ref{ThmConvL} and \ref{ThmConv1}) and the proof of Brunn-Minkowski type inequalities (Theorems \ref{ThmBM1} and \cite[Theorem 2]{LiuMaXu}),
as well as results like Theorem \ref{ThmUrysohn}.
Notice also that Theorem \ref{ThmBM1} and Theorem \ref{ThmUrysohn} are in fact corollaries of a more general result,
namely Theorem \ref{ThmBMRearr},
which can be considered for this reason the main result of this paper and has also other interesting consequences,
as we shall see.
The range of application of this technique is very large
and I refer to \cite{SaLectNotes} for more details and general results.
\bigskip

The paper is organized as follows. In Section 2, I introduce notation and recall some basic facts about
Minkowski addition of convex functions, Hessian operators and Brunn-Minkowski inequalities.
In Section 3 I prove Theorem \ref{ThmConvL}. Section 4 contains the proof of Theorem \ref{ThmConv1}. Section 5 is devoted to
Theorem \ref{ThmBMRearr}.
In Section 6 I prove Theorem \ref{ThmBM1}. Section 7 contains
the proof of Theorem \ref{ThmUrysohn}. Finally, In Section 8 I prove Theorem \ref{ThmConvp}
and a Brunn-Minkowski inequality for the
functional $\tau_{2,p}$ which generalize $\tau_2$, as well as an analogue of Theorem \ref{ThmBMRearr} for problem \eqref{eqp}
and the isoperimetric inequalities \eqref{UryL}-\eqref{Iso1}; I also
give some comments and remarks and suggest some open problems.

\section{Preliminaries}

\subsection{Notation}

Throughout the paper, $\Omega$ (possibly with subscripts) denotes a bounded domain in
${\bf R}^3$; in general $\Omega$ will be also convex.
We say that $\Omega$ is of class $C^2_+$ if its boundary $\partial\Omega$ is of
class $C^2$ and the Gauss curvature at every point of
$\partial\Omega$ is strictly positive. We say that $\Omega$ is of class $\mathcal{C}_1^+$ if it is convex, with boundary
of class $C^{3,1}$ having everywhere positive mean curvature.

Let $u:\Omega\rightarrow {\bf R}$ be a twice differentiable
function; for $i,j=1,2,3$ we set $u_i=\frac{\partial u}{\partial
x_i}$ and $u_{ij}=\frac{\partial^2 u}{\partial x_i \partial
x_j}$ and we denote by $Du$ the gradient of $u$ and
by $D^2u$ its Hessian matrix. We say that $u$ is of class
$C^{2,+}(\Omega)$ if $u\in C^2(\Omega)$ and $D^2 u(x)>0$
for every $x\in\Omega$.

We denote by $\Sn$ the space of the real symmetric $n\times n$ matrix.
If $A\in\Sn$ we write $A\geq 0$ if $A$ is positive semidefinite and $A>0$ if $A$ is positive definite.
Then we set $\Sn^+=\{A\in\Sn\,:\,A\geq 0\}$ and $\Sn^{++}=\{A\in\Sn\,:\,A>0\}$.
By $A\geq B$, we mean $A-B\geq 0$. If $A\in\Sn^{++}$ we denote by $A^{-1}$ its inverse matrix.

\subsection{Hessian operators and Hessian equations}
Let $A=(a_{ij})\in\Sn$ and denote by
$\lambda_1,...,\lambda_n$ its eigenvalues. For $k\in
\left\{1,...,n\right\}$, the $k-$th elementary symmetric function
of $A$ is
$$S_k(A)=S_k(\lambda_1,...,\lambda_n)=\sum_{1\leq i_1
<\cdots<i_k\leq n } \lambda_{i_1}\cdots \lambda_{i_k}.$$
Note that $S_k(A)$ is just the sum of all $k \times k$ principal minors of $A$.
In particular $S_1(A)=\text{tr}(A)$ is the trace of $A$ and $S_n(A)=\text{det}(A)$ is its determinant.

The
operator $S_k^{1/k}$, for $k=1,...,n$, is homogeneous of degree
$1$ and it is increasing and concave if restricted to
$$\Gamma_k=\{A\in\Sn\,:\,S_i(A)> 0\text{ for }i=1,\dots,k\}\,.$$
Moreover $S_k(A^{-1})^{-1/k}$ is concave in the class $\Sn^{++}$.

The following algebraic lemma from \cite{LiuMaXu} is crucial to this paper.
I recall it here for the convenience of the reader.
\begin{lem}\label{lemmaLMX}(\cite[Proposition 4.3]{LiuMaXu})
Let $P\in\mathcal{S}_3^+$ be a fixed matrix, $P\neq 0$. Then the functions
$f(A)=\frac{S_1(PA^{-1})}{S_2(A^{-1})}$ and $g(A)=S_1(PA^{-1})^{-1}$ are concave in $\mathcal{S}_3^{++}$.
\end{lem}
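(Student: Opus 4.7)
The plan is to verify both concavity statements by direct second-order computation along arbitrary affine lines in $\mathcal{S}_3^{++}$, reducing each inequality to a Cauchy--Schwarz--type bound via the congruence substitution $\tilde P = A^{-1/2}PA^{-1/2}$, $\tilde H = A^{-1/2}HA^{-1/2}$. The main ingredients are the expansion $(A+tH)^{-1} = A^{-1} - tA^{-1}HA^{-1} + t^2A^{-1}HA^{-1}HA^{-1} + O(t^3)$ and cyclic invariance of trace; together with the fact that $\tilde P \geq 0$ (since $P \geq 0$), these make $\tilde P^{1/2}$ available.

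First I would dispose of $g$. Writing $\phi(t) = \text{tr}(P(A+tH)^{-1})$ so that $g(A+tH) = 1/\phi(t)$, the calculation gives
\begin{equation*}
\phi(0) = \text{tr}(\tilde P), \qquad \phi'(0) = -\text{tr}(\tilde P\tilde H), \qquad \phi''(0) = 2\,\text{tr}(\tilde P\tilde H^2).
\end{equation*}
Concavity of $1/\phi$ at $t=0$ is equivalent to $\phi(0)\phi''(0) \geq 2\phi'(0)^2$, i.e.
\begin{equation*}
\text{tr}(\tilde P)\,\text{tr}(\tilde P\tilde H^2) \;\geq\; \text{tr}(\tilde P\tilde H)^2,
\end{equation*}
which is precisely the Cauchy--Schwarz inequality in the Frobenius inner product $\langle X,Y\rangle_F = \text{tr}(XY^*)$ applied to $X = \tilde P^{1/2}$ and $Y = \tilde P^{1/2}\tilde H$: indeed $\langle X,Y\rangle_F = \text{tr}(\tilde P\tilde H)$, $\|X\|_F^2 = \text{tr}(\tilde P)$ and $\|Y\|_F^2 = \text{tr}(\tilde P\tilde H^2)$. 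Since $H$ was arbitrary, $g$ is concave on $\mathcal{S}_3^{++}$.

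For $f$ the situation is more delicate. I would start by using the three-dimensional identity $S_2(A^{-1}) = \text{tr}(A)/\det(A)$ to rewrite
\begin{equation*}
f(A) = \frac{\det(A)\,\text{tr}(PA^{-1})}{\text{tr}(A)} = \frac{\text{tr}(P\,\text{adj}(A))}{\text{tr}(A)},
\end{equation*}
so that $f = \phi\chi/\psi$ with $\psi(t) = \text{tr}(A+tH)$ linear, $\chi(t) = \det(A+tH)$ and $\phi(t) = \text{tr}(P(A+tH)^{-1})$. From $\det(A+tH) = \det(A)\det(I+tA^{-1}H)$ I would read off $\chi'(0)/\chi(0) = \text{tr}(A^{-1}H)$ and $\chi''(0)/\chi(0) = 2\,S_2(A^{-1}H)$, while $\psi'(0) = \text{tr}(H)$ and $\psi''(0) = 0$. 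Combining these with the derivatives of $\phi$ computed above, the condition $f''(0) \leq 0$ unfolds into a quadratic form in $H$. The hard part will be precisely this algebraic step: unlike the one-shot Cauchy--Schwarz reduction for $g$, the ratio structure of $f$ generates cross-terms between $\phi'$, $\chi'$ and $\psi'$ that do not immediately regroup into a single matrix inner-product inequality, and to close the estimate I expect to need a careful use of the $n=3$ structure --- in particular the Cayley--Hamilton identity $\text{adj}(A) = A^2 - \text{tr}(A)A + S_2(A)I$ --- together with a simultaneous congruence reduction of $A$, $P$ and the relevant quadratic in $\tilde H$ so that the final inequality decomposes into an elementary polynomial inequality in the eigenvalues of $A$, weighted by the (non-negative) spectral data of $\tilde P$.
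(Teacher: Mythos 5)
The paper does not actually prove this lemma: it is stated as a citation to Liu--Ma--Xu, Proposition 4.3 (``I recall it here for the convenience of the reader''), so there is no internal proof in the paper to compare against.

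Your treatment of $g$ is correct and complete. The congruence $\tilde P = A^{-1/2}PA^{-1/2}$, $\tilde H = A^{-1/2}HA^{-1/2}$ cleanly reduces the second-derivative test for $1/\phi$ (with $\phi(t)=\mathrm{tr}\bigl(P(A+tH)^{-1}\bigr)$) to the Frobenius Cauchy--Schwarz inequality
$\mathrm{tr}(\tilde P)\,\mathrm{tr}(\tilde P\tilde H^2)\geq \mathrm{tr}(\tilde P\tilde H)^2$,
and the identification $X=\tilde P^{1/2}$, $Y=\tilde P^{1/2}\tilde H$ is exactly right; since $A$ and $H$ were arbitrary and $g$ is smooth, this does establish concavity of $g$ on $\mathcal{S}_3^{++}$.

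The concavity of $f$, however, is the substantial half of the lemma, and that is exactly where your argument stops. The preliminary reductions are correct---$S_2(A^{-1})=\mathrm{tr}(A)/\det(A)$ in dimension $3$, hence $f(A)=\mathrm{tr}\bigl(P\,\mathrm{adj}(A)\bigr)/\mathrm{tr}(A)$, and the formulas $\chi'(0)/\chi(0)=\mathrm{tr}(A^{-1}H)$, $\chi''(0)/\chi(0)=2S_2(A^{-1}H)$, $\psi''\equiv 0$ are all right---but you never bring the resulting quadratic form in $H$ to a sign-definite shape. You instead state that you ``expect to need'' Cayley--Hamilton and a simultaneous congruence reduction, and you explicitly flag that the cross terms among $\phi'$, $\chi'$, $\psi'$ do not regroup into a single matrix inner-product inequality. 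That is an honest acknowledgment that the hard part is not done, and it is a real gap: the ratio $f=\phi\chi/\psi$ has $f''(0)\psi^3 = u''\psi^2-2u'\psi\psi'+2u(\psi')^2$ with $u=\phi\chi$, a genuinely mixed expression, and showing its nonpositivity for all $H\in\mathcal{S}_3$, $A\in\mathcal{S}_3^{++}$, $P\in\mathcal{S}_3^{+}\setminus\{0\}$ is not a routine extension of the Cauchy--Schwarz step used for $g$. Until that computation is actually carried out (or an alternative argument is supplied---for instance, verifying that $A\mapsto\mathrm{tr}\bigl(P\,\mathrm{adj}(A)\bigr)/\mathrm{tr}(A)$ is superadditive and degree-$1$ homogeneous), the concavity of $f$ remains unproved in your proposal.
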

The statement of \cite[Proposition 4.3]{LiuMaXu} may look slightly weaker than the above statement, but the claim
of Lemma \ref{lemmaLMX} is precisely what the authors prove in \cite{LiuMaXu}.

A Hessian equation is an equation of the following type
\begin{equation}\label{hessianeq}
S_k(D^2u)=f(x,u,Du)\quad\text{ in }\Omega\,,
\end{equation}
with $k\in\{1,\dots,n\}$.
Hessian equations have been the subject of many investigations after the seminal paper by Caffarelli, Nirenberg and Spruck
\cite{cns}; see \cite{W2} for more details and references. Here I just recall that equation \eqref{hessianeq}, when $k>1$, is elliptic only when restricted to the following class
of functions
$$
\Phi_k(\Omega)=\{u\in C^2(\Omega)\,:\,D^2u(x)\in\Gamma_k\text{ for every }x\in\Omega\}\,.
$$
Functions in $\Phi_k$ are called {\em $k$-convex} or {\em admissible} for $S_k$. For instance, when $k=n$
(that is for Monge-Amp\`{e}re equation) only
convex functions are admissible for $S_n$.

In connection with $k$-convex functions, it is useful to recall also the notion of $k$-convex set: a set $\Omega\subset\R^n$ of class
$C^2$ is said $k$-convex if $(\kappa_1(x),\dots,\kappa_{n-1})\in\Gamma_k$ for every $x\in\partial\Omega$, where
$\kappa_1(x),\dots,\kappa_{n-1}(x)$ are the principal curvature of $\partial\Omega$ at $x$.
Clearly, $(n-1)$-convex sets are $C^2_+$ sets.
A (regular) level set of a $k$-convex function is $(k-1)$-convex, see \cite{cns}.

\subsection{Minkowski addition of convex functions and the convex envelope of a non-convex function}
Let
$\Omega_0$ and $\Omega_1$ be two convex sets in $\R^n$,
$u_0$ and $u_1$ two convex functions in $\Omega_0$ and $\Omega_1$ respectively.
For $t\in[0,1]$, the {\em inf-convolution} $\tilde{u}_t$ of $u_0$ and $u_1$ is defined
in the convex set $\Omega_t=(1-t)\Omega_0+t\,\Omega_1$ as follows
\begin{equation*}
\tilde{u}_t(x)=\inf \left \{(1-t)u_0(x_0)+tu_1(x_1):\;x_i \in
\Omega_i,\,i=0,1,\,x=(1-t)x_0+tx_1\right\}\,.
\end{equation*}

As $u_0$ and $u_1$ are convex, the function $\tilde{u}_t$  is convex, see
 \cite[Section 5]{Rocka}, and its epigraph coincides with the Minkowski linear combination of the epigraphs of $u_0$ and $u_1$ (see \cite{Salani}), i.e.
\begin{equation}\label{MinkAddGraph}
\begin{array}{ll}
\{(x,s)\in\R^{n+1}\,:\,x\in\Omega_t,\,s\geq\tilde{u}_t(x)\}\\
\quad=(1-t)\{(x,s)\,:\,x\in\Omega_0,\,s\geq u_0(x)\}+
t\,\{(x,s)\,:\,x\in\Omega_1,\,s\geq u_1(x)\}\,.
\end{array}
\end{equation}
For this reason I will refer to this operation with the expression {\em Minkowski combination} of $u_0$ and $u_1$ instead of the more usual
{\em infimal convolution.}

Of course, one can consider the combination of more than two functions: let $N\in\N$ and $t=(t_0,\dots,t_N)\in\Upsilon_N$, where
$$\Upsilon_N=\left\{(s_0,\dots,s_N)\,:\,s_i\geq 0\,\text{ for }i=0,\dots,N,\,\sum_{i=0}^Ns_i=1\right\};
$$
for $i=0,\dots,N$, let $\Omega_i$ be a convex subset of $\R^n$ and $u_i$ a convex function defined in $\Omega_i$; then we
set $\Omega_t=\sum_{i=0}^Nt_i\Omega_i$ and we define the function $\tilde u_t$ as follows:
\begin{equation}\label{defut}
\tilde{u}_t(x)=\inf \left \{\sum_{i=0}^Nt_iu_i(x_i):\;x_i \in
\Omega_i\text{ for }i=0,\dots,N,\,x=\sum_{0=1}^Nt_ix_i\right\}\,.
\end{equation}

The following lemma is a slight improvement of \cite [Lemma 2.1]{CoCuSa}.
\begin{lem} Let $N\in\N$. For $0=1,\dots,N$, let $\Omega_i\subset\R^n$ be an open, bounded, convex set
and $u_i\in C^1(\Omega_i)$ be a strictly convex function such
that
\begin{equation}\label{Duinfty}
\lim_{x\rightarrow\partial\Omega_i}|Du_i(x)|=\infty\,.
\end{equation}
Then,
for $t\in\Upsilon_{N}$ (with the notation introduced above), ${\tilde
  u}_t\in C^1(\Omega_t)$ and it is strictly concave; moreover,
for every $x\in\Omega_t$, there exists a
unique $(N+1)$-tuple of points $(x_0,\dots,x_N)\in\prod_{0=1}^N\Omega_i$ such that
\begin{eqnarray}
&x=\sum t_ix_i\,,\label{unicixy}&\\
&\tilde{u}_t(x)=\sum t_iu_i(x_i)\,,
\label{massimante}&\\
&D\tilde{u}_t(x) = Du_0(x_0)=\dots=Du_N(x_N)\,.&
\label{gradienticonvolu}
\end{eqnarray}
If in addition $x\in\Omega_t$ is such that $u_0,\dots,u_N$  are twice
differentiable at the corresponding point $x_0,\dots,x_N$ respectively and
$D^2 u_0(x_0)>0,\dots,D^2 u_N(x_N)>0$, then $\tilde u_t$ is twice
differentiable at $x$ and
\begin{equation}
D^2\tilde{u}_t(x)=\left[\sum_{i=0}^N t_i\left(D^2u_i(x_i)\right)^{-1}\right]^{-1}\,.
\label{hessianeconvolu}
\end{equation}
\label{lemmaconvolu}
\end{lem}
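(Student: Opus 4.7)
The plan is to reduce everything to a single identity: the Fenchel conjugate of $\tilde u_t$ equals the convex combination of the conjugates $u_i^*$. Swapping $\inf$ and $\sup$ in \eqref{defut} immediately yields
$$
\tilde u_t^*(y) \;=\; \sum_{i=0}^N t_i\, u_i^*(y)\qquad\text{for every }y\in\R^n.
$$
The hypothesis \eqref{Duinfty} together with strict convexity says that each $u_i$ is \emph{essentially smooth} in the sense of Rockafellar, so its conjugate $u_i^*$ is finite, strictly convex and of class $C^1$ on all of $\R^n$. Since $\tilde u_t^*$ is a positive convex combination of such functions, it inherits these properties, and by the biconjugation theorem $\tilde u_t=(\tilde u_t^*)^*$ is then strictly convex and of class $C^1$ on the interior of its effective domain, which is precisely $\Omega_t$.

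Next I would deduce the decomposition identities \eqref{unicixy}--\eqref{gradienticonvolu} at once from this dual picture. Fix $x\in\Omega_t$, set $y:=D\tilde u_t(x)$, and define $x_i:=Du_i^*(y)$ for $i=0,\dots,N$. Then $x_i\in\Omega_i$ and $Du_i(x_i)=y$ by Legendre reciprocity, and differentiating the conjugate identity gives
$$
x \;=\; D\tilde u_t^*(y) \;=\; \sum_{i=0}^N t_i\, Du_i^*(y) \;=\; \sum_{i=0}^N t_i\, x_i,
$$
so this particular tuple realises the constraint in \eqref{defut}. Summing the Young--Fenchel equalities $u_i^*(y)+u_i(x_i)=y\cdot x_i$ with weights $t_i$ produces $\tilde u_t(x)=\sum t_i u_i(x_i)$, which proves \eqref{massimante} and \eqref{gradienticonvolu}; uniqueness is immediate from strict convexity of each $u_i$, which makes each gradient map $Du_i$ injective.

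For the Hessian formula \eqref{hessianeconvolu}, the hypothesis $D^2u_i(x_i)>0$ allows the inverse function theorem to be applied to $Du_i$ near $x_i$, so $D^2u_i^*(y)=(D^2u_i(x_i))^{-1}$. Summing with weights $t_i$ gives
$$
D^2\tilde u_t^*(y) \;=\; \sum_{i=0}^N t_i\,(D^2u_i(x_i))^{-1},
$$
a positive definite matrix whose inverse is $D^2\tilde u_t(x)$, yielding \eqref{hessianeconvolu} together with twice-differentiability of $\tilde u_t$ at $x$.

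The main technical subtlety, and precisely the point where the argument refines \cite[Lemma 2.1]{CoCuSa}, is verifying that the minimum in \eqref{defut} is always attained at interior points $x_i\in\Omega_i$ and, dually, that $u_i^*$ is finite on all of $\R^n$. Both facts rest on the blow-up condition \eqref{Duinfty}: a Lagrange multiplier of finite modulus cannot match an infinite boundary gradient, so no minimizer escapes the interior; dually, the image of $Du_i$ exhausts $\R^n$, so $u_i^*(y)<\infty$ for every $y$. These are the standard properties of essentially smooth convex functions with bounded domains, but their careful verification for a combination of $N+1$ pieces is the step that requires attention.
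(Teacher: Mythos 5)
Your proof is correct and, importantly, is substantially more explicit than the paper's own argument, which is essentially a one-line reduction to \cite[Lemma 2.1]{CoCuSa} with the observation that \eqref{Duinfty} is enough to guarantee $Du_i(\Omega_i)=\R^n$. You have supplied a self-contained dual proof built around the identity $\tilde u_t^*=\sum_i t_i u_i^*$ (obtained by absorbing the weights via $z_i=t_ix_i$, $g_i(z)=t_iu_i(z/t_i)$) and the Rockafellar machinery of essentially smooth / essentially strictly convex functions. This is almost certainly the skeleton of the argument in \cite{CoCuSa}, but by bringing it to the surface you make transparent exactly where \eqref{Duinfty} is used: it forces dom$\,u_i^*=\R^n$ and makes $Du_i\colon\Omega_i\to\R^n$ a homeomorphism, which is precisely the point the paper flags.

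Two small points worth being precise about. First, the "swap $\inf$ and $\sup$" step is really the standard computation that the conjugate of a weighted infimal convolution is the weighted sum of conjugates; no minimax principle is needed since the inner $\inf$ disappears under the outer $\sup$ over $x$. Second, the phrase "inverse function theorem" in the Hessian step is slightly loose: $Du_i$ is only assumed $C^0$ with pointwise second differentiability at $x_i$, so the classical theorem does not apply. What one actually uses is that the inverse of a continuous, injective gradient map of a convex function is differentiable at $y=Du_i(x_i)$ whenever $D^2u_i(x_i)>0$, with $D(Du_i^{-1})(y)=(D^2u_i(x_i))^{-1}$; this follows from $h=D^2u_i(x_i)(x-x_i)+o(|x-x_i|)$ together with continuity of $Du_i^{-1}$ and positivity of $D^2u_i(x_i)$, which yields $|x-x_i|=O(|h|)$. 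With that correction noted, the argument is complete and rigorous.
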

\begin{proof}
This lemma almost coincides with the obvious extension of Lemma 2.1 of \cite{CoCuSa} to the case of $N+1$ functions (apart from considering convex functions in place of concave functions). The only difference is that here we only assume \eqref{Duinfty} in place of $\lim_{x\rightarrow\partial\Omega_i}u_i(x)=+\infty$; then we have just to notice that
\eqref{Duinfty} implies \cite[(11)]{CoCuSa}, that is
$$
D{\tilde
  u}_t(\Omega_t)=Du_0(\Omega_0)=\dots=Du_N(\Omega_N)=\R^n\,.
$$
\end{proof}

Next we notice that the definition of $\tilde u_t$ does not need the functions $u_0,\dots,u_N$ to be convex, as Minkowski addition is well defined and interesting also for non-convex sets. In this case \eqref{MinkAddGraph} still holds, but of course $\tilde u_t$ is in general not convex.

Moreover, when the function $u$ is not convex, it is interesting to consider the case when
$u_0=\dots=u_N=u$ and $\Omega_0=\dots=\Omega_N=\Omega$.
In such a case \eqref{defut} reads
$$
\tilde u_t=\inf\left \{\sum_{i=0}^N t_iu(x_i):\;x_i\in
\Omega,\,i=0,\dots,N,\,x=\sum_{i=0}^N t_ix_i\right\}\,.
$$
The epigraph of $\tilde u_t$ is then the Minkowski linear combination of $N+1$ copies of the epigraph of $u$.
When $N=n$ (or greater), $t$ is not fixed and the infimum in \eqref{defut} is taken also with respect to $t\in\Upsilon_{n}$, then we obtain the function
\begin{equation}\label{deftildeu}
\tilde{u}(x)=\inf \left \{\sum_{i=0}^n t_iu_i(x_i):\;x_0,\dots,x_N \in
\Omega,\,t\in\Upsilon_n,\,x=\sum_{i=0}^N t_ix_i\right\}\,,
\end{equation}
that is the {\em convex envelope} of the function $u$.

Obviously $\tilde u\leq u$ in $\Omega$; the points where $\tilde u(x)=u(x)$ are called {\em contact points} and the set
$$
C_u(\Omega)=\{x\in\Omega\,:\,\tilde u(x)=u(x)\}
$$
is {\em the contact set of $u$ in $\Omega$}.

Notice that $$\inf_\Omega\tilde u=\inf_\Omega u$$ and the minimum points of $u$ (if any) always belong to the contact set, i.e. if $m=\inf_{\Omega}u$ in $\Omega$ and $u(x)=m$, then $x\in C_u(\Omega)$.

\subsection{Brunn-Minkowski inequalities}\label{BMsec}

The Brunn-Minkowski inequality in its
classical formulation regards the volume of convex bodies.
Let $K_0$ and $K_1$ be compact convex
sets in ${\bf R}^n$ with non-empty interior, i.e. {\em convex bodies}, and fix $t\in[0,1]$; then
consider the convex linear combination of these sets:
\begin{equation*}
K_t=(1-t)K_0+t K_1=\{(1-t)x+ty\,|\, x\in K_0\,,\, y\in K_1\}\,,
\end{equation*}
which is still a convex body.
The following inequality holds:
\begin{equation}
\label{intro.1}
V(K_t)^{1/n}\ge(1-t)V(K_0)^{1/n}+tV(K_1)^{1/n}\,,
\end{equation}
where $V$ denotes the $n$-dimensional volume (i.e. the Lebesgue
measure). Moreover, equality holds in (\ref{intro.1}) if and
only if $K_0$ and $K_1$ are {\em homothetic}, i.e. they are equal up to
translations and dilatations.

The validity of the Brunn-Minkowski inequality (\ref{intro.1}) goes in fact far beyond the family of convex bodies, namely it can be extended to the class of measurable sets. Of course it
has a fundamental role in the theory of convex bodies, but its importance extends to many fields of analysis and it is
strongly connected to many other inequalities like the isoperimetric
inequality and the Sobolev inequality (see for instance \cite[Chapter 6]{sc} and the
beautiful survey paper \cite{gardner}).

Let $\coco$ denote the class of convex bodies in ${\bf R}^n$; $\coco$ is
endowed with a scalar multiplication for positive numbers
\begin{equation*}
s\, K=\{s\,x\,|\, x\in K\}\,,\quad K\in\coco\,,\, s>0\,,
\end{equation*}
and with the Minkowski addition
\begin{equation*}
K_0+K_1=\{x+y\,|\, x\in K_0\,,\, y\in K_1\}\,,\quad
K_0,K_1\in\coco\,.
\end{equation*}
The Brunn-Minkowski inequality is then equivalent to the concavity in $\coco$ of $V^{1/n}(\cdot)$, the
$n$-dimensional volume
raised to the power $1/n$; note that $V(\cdot)$ is positively
homogeneous and its order of homogeneity is precisely $n$:
\begin{equation*}
V(s\,K)=s^n\, V(K)\,,\quad \forall s>0\,.
\end{equation*}
These considerations suggest the following:

\begin{dfn} Let $F\,:\,\coco\rightarrow{\bf R}_+$ be a functional,
  invariant under rigid motions of ${\bf R}^n$ and positively
  homogeneous of some order $\alpha\ne 0$. We say that $F$ satisfies a Brunn-Minkowski type
  inequality if
  \begin{equation}\label{BMFdef}
  F^{1/\alpha}\,\text { is concave
  in }\coco\,.
  \end{equation}
\label{def.intro.1}
\end{dfn}

In convex geometry there are many examples of functionals satisfying
a Brunn-Minkowski inequality: the $(n-1)$-dimensional measure of the
boundary, the other {\em quermassintegrals},
etc. (see \cite{gardner} and \cite{sc}, for instance). On the other hand, inequalities
of Brunn-Minkowski type have been proved for functionals coming from
a quite different area: the Calculus of Variations.
The first example in this sense is due to Brascamp and Lieb
who proved that the first eigenvalue of the Laplace operator satisfies a
Brunn-Minkowski inequality (cfr \cite{bl}). Subsequently, Borell proved the
same result for the Newton capacity, the logarithmic capacity (in
dimension $n=2$) and the
torsional rigidity (cfr \cite{Borell1}, \cite{b2} and \cite {b3}
respectively). In \cite{cjl} Caffarelli, Jerison and
Lieb established equality conditions for the Newton capacity.
These results have been recently generalized, improved and developed in various
directions, starting from \cite{colesala}, where it is proved that the $p$-capacity, $p\in(1,n)$,
satisfies a Brunn-Minkowski inequality (including equality conditions).
Other related results are contained  for instance in \cite{Colesanti, CoCuSa, GardHart, Hart, LiuMaXu, Salani, WangXia}.
Obviously, the most relevant result for the present paper is the one contained in \cite{LiuMaXu}.
Notice that in all known cases, equality conditions are the same as in the classical Brunn-Minkowski inequality
for the volume, i.e. equality holds if and only if the involved sets
are (convex and) homothetic.

Due to the homogeneity of the involved functional and thanks to a standard argument, when $\alpha>0$
the Brunn-Minkowski inequality for $F$ is equivalent to any one of the following weaker concavity properties:

(i) $F^\beta$ is concave for some $\beta\in(0,1/\alpha]$;

(ii) $\log F$ is concave;

(iii) $F^\gamma$ is convex for some $\gamma<0$;

(iv) $F$ is quasi-concave, i.e.
$$F(K_t)\geq\min\{F(K_0),\,F(K_1)\}\quad\text{for every }K_0,\,K_1\in\coco\text{ and every }t\in[0,1]\,.$$

Finally, I recall here the Pr\'ekopa-Leindler inequality, which is a functional equivalent form of the Brunn-Minkowksi inequality.
\begin{prop}{\bf (Pr\'{e}kopa-Leindler Inequality)} Let $f,g,h\in
  L^1({\R}^n)$ be nonnegative functions and $t\in(0,1)$. Assume that
$$
h\left((1-t)x+ty\right)\geq f(x)^{1-t}g(y)^t\,,
$$
for all $x,y\,\in {\R}^n$. Then
$$
\int_{{\R}^n}h(x)dx\geq\left(\int_{{\R}^n}f(x)dx\right)^{1-t}\left(\int_{{\R}^n}g(x)dx\right)^t\,.
$$
In addition, if equality holds then $f$ coincides a.e. with a
log-concave function and there exist $C\in{\R}$, $a>0$ and
$y_0\in{\R}^n$ such that
$$
g(y)=C\,f(a y+y_0)\quad\mbox{for almost every
}\;y\in {\R}^n\,.
$$
\label{Prekopaleindler}
\end{prop}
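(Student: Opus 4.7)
The plan is to prove the inequality by induction on the dimension $n$, reducing to the one-dimensional case via Fubini, and then to extract the equality characterization by tracking rigidity through the induction. The key structural feature that makes induction work is that the hypothesis on $h$ is purely pointwise and multiplicative, so it passes cleanly to slices in the last variable.

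For the base case $n=1$, first reduce to $\|f\|_\infty=\|g\|_\infty=1$ by rescaling $f,g$ by positive constants and absorbing the resulting scalar into $h$ so that the hypothesis is preserved (if either sup-norm is zero the inequality is trivial). Then for every $s\in(0,1)$, whenever $f(x)\geq s$ and $g(y)\geq s$ the hypothesis gives $h((1-t)x+ty)\geq s^{1-t}s^t=s$, so
\[
\{h\geq s\}\;\supseteq\;(1-t)\{f\geq s\}+t\{g\geq s\}.
\]
The one-dimensional Brunn-Minkowski inequality for measurable sets in $\R$ (namely $|A+B|\geq|A|+|B|$) yields $|\{h\geq s\}|\geq (1-t)|\{f\geq s\}|+t|\{g\geq s\}|$. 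Integrating in $s$ via Cavalieri and applying the arithmetic-geometric mean inequality completes the 1D case.

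For the induction step, write $x=(x',x_n)\in\R^{n-1}\times\R$ and similarly for $y,z$. Fix $x_n,y_n$ and set $z_n=(1-t)x_n+ty_n$. The pointwise hypothesis applied to the slices $f(\cdot,x_n)$, $g(\cdot,y_n)$, $h(\cdot,z_n)$, combined with the $(n-1)$-dimensional P-L inequality, gives
\[
\int_{\R^{n-1}}h(z',z_n)\,dz'\;\geq\;\Bigl(\int_{\R^{n-1}}f(x',x_n)\,dx'\Bigr)^{1-t}\Bigl(\int_{\R^{n-1}}g(y',y_n)\,dy'\Bigr)^t.
\]
Defining the marginals $F(x_n)=\int f(\cdot,x_n)$, $G(y_n)=\int g(\cdot,y_n)$, $H(z_n)=\int h(\cdot,z_n)$, this is exactly the one-dimensional P-L hypothesis for $F,G,H$, and one more application of the 1D case closes the induction.

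The main obstacle will be the equality characterization. The difficulty is that the 1D Brunn-Minkowski inequality is rigid only in the sense that $\{f\geq s\}$ and $\{g\geq s\}$ must be essentially intervals differing by a translation and a common dilation ratio; this rigidity has to be compatible \emph{across all levels $s$ simultaneously}, which is what forces $f$ to agree a.e.\ with a log-concave function (its superlevel sets must form a nested family of intervals whose endpoints depend affinely on a monotone parameter) and pins $g$ to be an affine rescaling of $f$ of the form $g(y)=Cf(ay+y_0)$. Propagating this through the induction requires a careful Fubini argument: equality in the outer 1D integration forces rigidity of the marginals $F,G,H$, while equality in each fibered inner integral forces a common affine rescaling between the $x_n$-slice of $f$ and the corresponding $y_n$-slice of $g$; assembling both layers of information yields the stated global characterization.
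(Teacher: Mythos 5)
The paper does not actually prove this proposition; it states it and refers the reader to \cite{gardner} for the inequality and to Dubuc \cite{dubuc} for the equality characterization. Your proof of the inequality is the standard dimension-induction argument (one-dimensional level-set base case plus Fubini tensorization) that appears, for instance, in \cite{gardner}, and it is essentially correct up to two routine points worth addressing: the Minkowski combination $(1-t)\{f\geq s\}+t\{g\geq s\}$ need not be Lebesgue measurable, so one must pass through inner measure or approximate by compact sets before invoking the one-dimensional Brunn--Minkowski inequality; and in the induction step the marginals $F,G,H$ may be $+\infty$ (or undefined) on null sets, so one should set $F,G$ to zero there, allow $H=+\infty$, and verify the pointwise hypothesis still holds for \emph{all} $x_n,y_n$ before applying the one-dimensional case. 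Both are standard but needed for rigor.

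The equality characterization, however, is not actually proved: your final paragraph announces a strategy but does not execute it, and this is the genuinely hard part of the statement (it is Dubuc's theorem, and it is why the paper issues a separate citation for it). In one dimension, equality forces, for a.e.\ level $s$, that $\{f\geq s\}$ and $\{g\geq s\}$ are intervals modulo null sets and that the AM--GM step gives $\int f/\|f\|_\infty=\int g/\|g\|_\infty$; but deducing that $f$ agrees a.e.\ with a single log-concave function and that $g(y)=Cf(ay+y_0)$ requires showing the interval endpoints vary coherently in $s$ and that the dilation and translation parameters coincide across all levels --- this is a real argument, not a direct consequence of the level-wise rigidity. Worse, in the induction step the fiber-wise equality yields an affine map that a priori depends on the slice $x_n$, and reconciling these slice-dependent maps with the affine map produced by the marginals into one global affine transformation is precisely where Dubuc's proof does its substantive work. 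As written, the proposal leaves the equality part as a gap.
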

For a proof of the Pr\'{e}kopa-Leindler inequality and precise references,
see for instance \cite{gardner}. The equality condition is
due to Dubuc, see Theorem 12 in \cite{dubuc}.

\section{Proof of Theorem \ref{ThmConvL}}
\begin{proof}

Let
$$
v=-\log(-u)\,,
$$
where $u$ is an admissible solution of \eqref{eqL} (recall that by \cite{W} a $k$-convex solution $u\in C^\infty(\Omega)\cap C^{1,1}(\overline\Omega)$
of \eqref{eqL} exists and it is unique up to multiplication by a positive scalar factor).

Then
$$
u_i=e^{-v}v_i\quad\text{ and }\quad u_{ij}=e^{-v}(v_{ij}-v_iv_j)\quad\text{ for }i,j=1,2,3\,.
$$
Hence $v$ satisfies the following
\begin{equation}\label{eqvL}
\left\{\begin{array}{ll}
S_2(D^2v)-S_1(P(Dv)D^2v)=\Lambda_2(\Omega)\quad&\text{in }\Omega\\
v(x)\to+\infty\quad&\text{as }x\to\partial\Omega\,,
\end{array}
\right.
\end{equation}
where $P(Dv)$ is a positive semidefinite $3\times3$ matrix with entries
$$P_{ij}=|Dv|^2\delta_{ij}-v_iv_j\,,$$
as noticed in \cite{LiuMaXu}.

Let $\tilde v$ be the convex envelope of $v$:
$$
\tilde v(x)=\inf\{\sum_{i=0}^3 t_iv(x_i)\,:\,x_i\in\Omega,\,t\in\Upsilon_3,\,\sum_{i=0}^3t_ix_i=x\}\,.
$$
Notice that $\tilde v\leq v$ by definition, while
\begin{equation}\label{eqmin}
\min_{\overline\Omega}\tilde v=\min_{\overline\Omega}v\,.
\end{equation}
The statement of Theorem \ref{ThmConvL} claims that $\tilde v$ coincides with $v$.
We will prove this by showing that $\tilde v$ actually satisfies (possibly in the viscosity sense) the following
\begin{equation}\label{eqtildevL}
\left\{\begin{array}{ll}
S_2(D^2\tilde v)-S_1(P(D\tilde v)D^2\tilde v)\leq\Lambda_2(\Omega)\quad&\text{in }\Omega\\
\tilde v(x)\to+\infty\quad&\text{as }x\to\partial\Omega\,.
\end{array}
\right.
\end{equation}
Before proving \eqref{eqtildevL}, let us see how this will lead to the desired conclusion.
First we notice that, by a simple observation, since $\tilde v$ is convex, it is an admissible solution of \eqref{eqtildevL};
then $\tilde u=-e^{-\tilde v}$ is an admissible solution of
\begin{equation}\label{eqtildeuL}
\left\{\begin{array}{ll}
S_2(D^2\tilde u)\leq\Lambda_2(\Omega)(-\tilde u)^2\quad&\text{in }\Omega\\
\tilde u=0\quad&\text{on }\partial\Omega\,,
\end{array}
\right.
\end{equation}
whence (multiplying by $-\tilde u$ and integrating over $\Omega$)
$$
\frac{-\int_\Omega \tilde u\,S_2(D^2\tilde u)\,dx}{\int_\Omega|\tilde u|^{3}dx}\leq\Lambda_2(\Omega)\,,
$$
which implies $\tilde u=\lambda u$ for some $\lambda\neq 0$, by \cite{W}. Owing to \eqref{eqmin},
$\lambda=1$ and finally $u=\tilde u$.

To prove \eqref{eqtildevL}, consider a test function $\phi$ touching $\tilde v$ by below at some point $\bar x\in\Omega$,
i.e. a $C^2$ function such that $\phi(\bar x)=\tilde v(\bar x)$ and $\phi\leq\tilde v$ in a neighborhood of $\bar x$.
We have to prove that
\begin{equation}\label{eqphiL}
S_2(D^2\phi(\bar x))-S_1(P(D\phi(\bar x))D^2\phi(\bar x))\leq\Lambda_2(\Omega)\,.
\end{equation}
If $\bar x$ is a contact point for $v$, that is if $\tilde v(\bar x)=v(\bar x)$, there is nothing to prove, since $\phi$ touches also $v$ by below at $\bar x$ and correspondingly
$\psi=-e^{-\phi}$ touches $u$ by below at $\bar x$, then $\psi(\bar x)=u(\bar x)$, $D\psi(\bar x)=Du(\bar x)$ and $D^2\psi(\bar x)\leq D^2u(\bar x)$,
whence
$$
S_k(D^2\psi(\bar x))\leq S_k(D^2u(\bar x))=\Lambda_2(\Omega)u(\bar x)^2=\Lambda_2(\Omega)\psi(\bar x)^2
$$
which in turn is equivalent to
\eqref{eqphiL}.

Hence, assume $\tilde v(\bar x)<v(\bar x)$. First, we notice that $\bar x$ is not a minimum point of $v$ (or $\tilde v$),
whence $D\tilde v(\bar x)\neq 0$. Furthermore, since $v(x)\to+\infty$ as $x\to\partial\Omega$,
the infimum in the definition of $\tilde v$ is in fact a minimum, that is there exist $t\in\Upsilon_3$ and
$x_0,\dots,x_3\in\Omega$ such that
\begin{equation}\label{convenvelope}
\begin{array}{rcl}
&\bar x=\sum t_ix_i\,,&\\
&\tilde{v}(\bar x)=\sum t_iv(x_i)\,,
&\\
&D\tilde{v}(\bar x) = Dv(x_0)=\dots=Dv(x_3)\,,&
\end{array}
\end{equation}
see Lemma \ref{lemmaconvolu}.

Then, letting $p=D\tilde{v}(\bar x) = Dv(x_0)=\dots=Dv(x_3)$, the hyperplane
$$
z-\tilde v(\bar x)=<p,x-\bar x>
$$
coincides with
$$
z-v(x_i)=<p,x-x_i>\quad i=0,\dots,3\,,
$$
and it is a support hyperplane to the graph of $v$ at the points $x_0,\dots,x_N$ (and also to the graph of $\tilde v$ at $\bar x$),
i.e. $v(x)\geq v(x_i)+<p,x-x_i>$ for every $x\in\Omega$, $i=0,\dots,3$, equality holding if $x=x_i$.
Hence
\begin{equation}\label{D2vgeq0}
D^2v(x_i)\geq 0\quad i=0,\dots,3\,.
\end{equation}
In fact, up to an approximation procedure that we will show in detail later, we may assume
\begin{equation}\label{hpcomodo}
D^2v(x_i)>0\quad i=0,\dots 3\,.
\end{equation}
Then there exists $r>0$ such that $D^2v(x)>0$ in $B_i=B(x_i,r)$ for $i=0,\dots,3$. Let us denote by
$v_i$ the restriction of $v$ to $B_i$. Then $\tilde v$ restricted to $\tilde B=B(\bar x,r)$ coincides with the Minkowski
linear combination (with ratio $t$) of the functions $v_0,\dots,v_3$, it is twice differentiable at $\bar x$
and $D^2\tilde v(\bar x)$ satisfies \eqref{hessianeconvolu}, by Lemma \ref{lemmaconvolu}.
Then
$$
D^2\phi(\bar x)\leq\left[\sum_{i=0}^3 t_i\left(D^2u_i(x_i)\right)^{-1}\right]^{-1}\,,
$$
while $D\phi(\bar x)=p$. Then, by applying Lemma \ref{lemmaLMX}, we obtain
\begin{equation}\label{S2divisoS1}
\frac{S_2(D^2\phi(\bar x))}{S_1(P(D\phi(\bar x))D^2\phi(\bar x))}-1\leq\frac{\Lambda_2(\Omega)}{S_1(P(D\phi(\bar x))D^2\phi(\bar x))}\,,
\end{equation}
whence \eqref{eqphiL}.
\bigskip

The proof is essentially concluded. We have just to show how to get rid of \eqref{hpcomodo},
which we assumed for our convenience. We use a similar argument to \cite{CoCuSa}.
For $\varepsilon>0$ and $i=0,\dots,3$ we set
$$
v_{i,\varepsilon}(x)=v_i(x)+\varepsilon \frac{| x
|^2}{2},\;\;x\in B_i\,.
$$
The function $v_{i,\varepsilon}$ is strictly convex in $B_i$ and, thanks to \eqref{D2vgeq0}, it holds
$D^2v_{i,\varepsilon}(x_i)=D^2v_i(x_i)+\varepsilon^2I>0$, where $I$ is the $3\times 3$ identity matrix.
We consider the sup-convolution
$\tilde{v}_{\varepsilon}$ of $v_{0, \varepsilon},\dots,v_{3,
\varepsilon}$.
Clearly $v_{i,\varepsilon}$ converges in $C^2$ norm to $v_i$
in $B_i$, for $i=0,\dots,3$, while
$\tilde{v}_{\varepsilon}$ converges uniformly (actually $C^1$-uniformly by \cite[Theorem 25.7]{Rocka}) to $\tilde{v}$ in
$\tilde B$ as $\varepsilon\to 0$. The test function $\phi_\varepsilon(x)=\phi(x)+\varepsilon \frac{| x
|^2}{2}$ touches $\tilde{v}_{\varepsilon}$ by below at $\bar x$ and converges $C^2$-uniformly to
$\phi$ as $\varepsilon\to 0$, clearly.
Then we repeat the above argument for $\phi_\varepsilon$ and then we let $\varepsilon\to 0$, obtaining \eqref{eqphiL}.

\end{proof}
\begin{rmk}
Notice that we do not really need the $C^2$ regularity, nor the strict $1$-convexity of the set $\Omega$ in the previous proof.
We just need $\Omega$ to be convex.
On the other hand, our assumptions on $\Omega$ guarantee the existence of classical solutions to problem \eqref{eqL},
see \cite{W}.
\end{rmk}

\section{Proof of Theorem \ref{ThmConv1}}
\begin{proof}
By \cite{cns}, there exists a unique $k$-convex solution $u\in C^\infty(\Omega)\cap C^{1,1}(\overline\Omega)$ of \eqref{eq1}.
Let
$$v=-\sqrt{-u}\,.
$$
Then
$$
u_i=-2vv_i\quad\text{ and }\quad u_{ij}=-2vv_{ij}-2v_iv_j\quad\text{ for }i,j=1,2,3\,.
$$
Hence $v$ satisfies the following
\begin{equation}\label{eqv1}
\left\{\begin{array}{ll}
v^2S_2(D^2v)+vS_1(P(Dv)D^2v)=1/4\quad&\text{in }\Omega\\
v=0\quad&\text{on }\partial\Omega\,,
\end{array}
\right.
\end{equation}
where $P(Dv)$ is the same as before.

Notice that, thanks to Hopf's lemma, it holds $|Du|>0$ on $\partial\Omega$ whence
\begin{equation}\label{Dvinfty}
\lim_{x\to\partial\Omega}|Dv(x)|=+\infty\,,
\end{equation}

Now, let $\tilde v$ be the convex envelope of $v$, consider a test function $\phi$ touching $\tilde v$ by below at some point $\bar x\in\Omega$
and notice that, thanks to \eqref{Dvinfty} by Lemma \ref{lemmaconvolu}, there exist points $x_0,\dots,x_3\in\Omega$
and $t\in\Upsilon_3$ such that
\eqref{convenvelope} hold. Then the proof proceeds almost as the proof of Theorem \ref{ThmConvL}.
There are two main (but not big) differences.

The first one is that \eqref{S2divisoS1} has to be replaced by the following
\begin{equation}\label{S2divisoS11}
\phi(\bar x)^2\frac{S_2(D^2\phi(\bar x)}{S_1(P(D\phi(\bar x))D^2\phi(\bar x))}
+\phi(\bar x)\leq\frac{1}{4\,S_1(P(D\phi(\bar x))D^2\phi(\bar x))}\,.
\end{equation}
The latter means that $\tilde v$ satisfies, in the viscosity sense,
$$
\left\{\begin{array}{ll}
\tilde v^2S_2(D^2\tilde v)+\tilde vS_1(P(D\tilde v)D^2\tilde v)\leq1/4
\quad&\text{in }\Omega\\
\tilde v=0\quad&\text{on }\partial\Omega\,,
\end{array}
\right.
$$
which in turn implies that $\tilde u=-\tilde v^2$ is a supersolution of problem \eqref{eq1}. Then
$\tilde u\geq u$, by the Comparison Principle;
on the other hand $\tilde u\leq u$ by the very definition of $\tilde v$, then $\tilde u=u$ and the proof is finished.

The second difference is that, in proving \eqref{S2divisoS11}, we need the convexity of the operator
$F:\R\times\mathcal{S}_3^{++}\to\R$ defined by
$$
F(t,A)=t^2\frac{S_2(A^{-1})}{S_1(P(p)A^{-1})}\,.
$$
To prove this we have just to combine Lemma \ref{lemmaLMX} with case (iii) ($\alpha=2$) of \cite[Lemma A.1]{IS}.

\end{proof}
\begin{rmk}
As for Theorem \ref{ThmConvL}, we assumed regularity and positive mean curvature of $\Omega$ only
to be sure to have existence of classical solutions to problem \eqref{eq1} according to \cite{cns}.
\end{rmk}
\begin{rmk}
Notice that the proof works unchanged for more general equations like
$$
S_2(D^2u)=f(Du/|Du|)
$$
or
$$
S_2(D^2u)=f(Du/\sqrt{-u})\,.
$$
\end{rmk}

\section{The main theorem}
This section contains the main theorem of this paper, which is the following.
\begin{thm}\label{ThmBMRearr}
Let $\Omega_0$ and $\Omega_1$ be two $C^2_+$ subsets of $\R^3$, $t\in[0,1]$ and
$$\Omega_t=(1-t)\Omega_0+t\Omega_1\,.
$$
Let us denote by $u_i$ the solution of (\ref{eq1}) when $\Omega=\Omega_i$, for
$i=0,1,t$, and let $\tilde u_t=-\tilde v_t^2$, where $\tilde v_t$ is the Minkowski linear combination of $v_0=-\sqrt{-u_0}$ and
$v_1=-\sqrt{-u_1}$ as defined by \eqref{defut}.

Then
\begin{equation}\label{eqBMRearr}
u_t\leq\tilde u_t\text{ in }\Omega_t\,.
\end{equation}
\end{thm}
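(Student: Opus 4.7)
The plan is to establish that $\tilde u_t = -\tilde v_t^2$ is an admissible viscosity supersolution of the Dirichlet problem \eqref{eq1} on $\Omega_t$; the Comparison Principle for admissible solutions of \eqref{eq1} then yields $u_t \le \tilde u_t$, exactly as at the end of the proof of Theorem \ref{ThmConv1}.

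The preliminary ingredients are immediate consequences of earlier results. Theorem \ref{ThmConv1} applied in $\Omega_0$ and $\Omega_1$ gives that $v_0$ and $v_1$ are convex, so $\tilde v_t$ is convex on $\Omega_t$. Since $u_0, u_1$ are classical admissible solutions, Hopf's lemma forces $|Du_i|>0$ on $\partial\Omega_i$, hence $|Dv_i(x)|\to\infty$ as $x\to\partial\Omega_i$, which is exactly the hypothesis \eqref{Duinfty} needed to apply Lemma \ref{lemmaconvolu}. Consequently, at every interior $\bar x\in\Omega_t$ the infimum defining $\tilde v_t(\bar x)$ is attained at a unique pair $(x_0,x_1)\in\Omega_0\times\Omega_1$, and $Dv_0(x_0)=Dv_1(x_1)=D\tilde v_t(\bar x)=:p$. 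The boundary condition $\tilde v_t=0$ on $\partial\Omega_t$ uses the standard fact that a boundary point of the Minkowski sum of two convex bodies cannot be decomposed with both pieces in the interiors: every valid decomposition of $\bar x\in\partial\Omega_t$ has $x_i\in\partial\Omega_i$, forcing $v_0(x_0)=v_1(x_1)=0$.

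The heart of the argument is the interior viscosity supersolution inequality. Taking a $C^2$ test function $\phi$ that touches $\tilde v_t$ from below at $\bar x\in\Omega_t$, and running the strict-convexification $v_{i,\varepsilon}(x)=v_i(x)+\varepsilon|x|^2/2$ used at the end of the proof of Theorem \ref{ThmConvL}, I may suppose $D^2v_i(x_i)>0$. Lemma \ref{lemmaconvolu} then supplies the harmonic-mean formula
$$D^2\tilde v_t(\bar x)=\bigl[(1-t)(D^2v_0(x_0))^{-1}+t(D^2v_1(x_1))^{-1}\bigr]^{-1},$$
together with $D\phi(\bar x)=p$ and $D^2\phi(\bar x)\le D^2\tilde v_t(\bar x)$. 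The $v_i$ satisfy
$$v_i(x_i)^2\,S_2(D^2v_i(x_i))+v_i(x_i)\,S_1(P(p)\,D^2v_i(x_i))=1/4\qquad(i=0,1),$$
and because the common gradient $p$ enters only through the matrix $P(p)$, the same matrix appears in all three instances of the equation. The conclusion then follows by the same algebraic manipulation used in the proof of Theorem \ref{ThmConv1}: Lemma \ref{lemmaLMX} supplies the concavity of $A\mapsto S_1(P(p)A^{-1})/S_2(A^{-1})$, and case (iii) ($\alpha=2$) of \cite[Lemma A.1]{IS} promotes this to the joint convexity of $(s,A)\mapsto s^2\,S_2(A^{-1})/S_1(P(p)A^{-1})$ for $s\le 0$ and $A\in\mathcal{S}_3^{++}$. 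Applied with weights $(1-t,t)$ to the pairs $(v_i(x_i),D^2v_i(x_i))$, this yields
$$\tilde v_t(\bar x)^2\,S_2(D^2\phi(\bar x))+\tilde v_t(\bar x)\,S_1(P(p)\,D^2\phi(\bar x))\le 1/4,$$
i.e. the supersolution inequality for $\tilde v_t$ in the viscosity sense.

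The main obstacle is this last convexity-transfer step: the prior instances in the paper phrase it for the convex envelope of a single $v$, whereas here two genuinely distinct functions $v_0,v_1$ are being combined. The crucial observation that $Dv_0(x_0)=Dv_1(x_1)=p$ makes the matrix $P(p)$ in Lemma \ref{lemmaLMX} the same across both evaluations, which is what lets the algebraic argument of Theorem \ref{ThmConv1} go through unchanged. With the supersolution property and the boundary values of $\tilde u_t$ in hand, the Comparison Principle closes the proof.
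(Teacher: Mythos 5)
Your proposal is correct and follows essentially the same route as the paper: Theorem \ref{ThmConv1} for convexity of $v_0,v_1$, Hopf's lemma plus Lemma \ref{lemmaconvolu} for the unique decomposition and Hessian formula, the concavity from Lemma \ref{lemmaLMX} combined with \cite[Lemma A.1]{IS} (case (iii), $\alpha=2$), and the Comparison Principle to close. You also make explicit two points the paper leaves implicit — that $Dv_0(x_0)=Dv_1(x_1)=p$ is what fixes the single matrix $P(p)$ needed for Lemma \ref{lemmaLMX} to apply across two distinct functions, and the boundary-decomposition argument for $\tilde v_t=0$ on $\partial\Omega_t$ — and you correctly take the test function touching $\tilde v_t$ from below (the paper's ``by above'' is a slip).
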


\begin{proof}
We know from Theorem \ref{ThmConv1}
that $v_0$ and $v_1$ are convex functions and
$$
Dv_0(\Omega_0)=Dv_1(\Omega_1)=\R^n\,,
$$
since  $|Du_i|>0$ on $\partial\Omega_i$ for $i=0,1$, by Hopf's lemma.

Let $\tilde v_t$ be the Minkowski linear combination of $v_0$ and $v_1$, defined by \eqref{defut} and
let $\phi$ be a function touching $\tilde v_t$ by above at some point $\bar x\in\Omega_t$. Then by Lemma \ref{lemmaconvolu}
there exists $x_0\in\Omega_0$ and $x_1\in\Omega_1$ such that
$$
\begin{array}{c}
\bar x=(1-t)\,x_0+t\, x_1\,,\\
\tilde v_t(x)=(1-t)\,v_0(x_0)+t\,v_1(x_1)\,,\\
D\tilde v_t(x)=Dv_0(x_0)=Dv_1(x_1)\,.
\end{array}
$$
Up to an approximation argument similar to before, we can assume $D^2u_0(x_0)>0$ and $D^2u_1(x_1)>0$, whence, by \eqref{hessianeconvolu},
$$
D^2\tilde v_t(\bar x)=\left[(1-t)D^2v_0(x_0)^{-1}+t\,D^2v_1(x_1)^{-1}\right]^{-1}\,.
$$
From now on, we can follow the same steps of the proof of Theorem \ref{ThmConvL} with the modifications specified
for Theorem \ref{ThmConv1}, to get that
$\tilde v_t$ satisfies
$$
\left\{\begin{array}{ll}
\tilde v_t^2S_2(D^2\tilde v_t)+\tilde v_tS_1(P(D\tilde v_t)D^2\tilde v_t)\leq1/4\quad&\text{in }\Omega_t\\
\tilde v_t=0\quad&\text{on }\partial\Omega_t\,,
\end{array}
\right.
$$
which implies that $\tilde u_t=-\tilde v_t^2$ is a supersolution of problem \eqref{eq1} when $\Omega=\Omega_t$. Then
\eqref{eqBMRearr} follows from the Comparison Principle.
\end{proof}

Arguing as in the proof of the equality case in the Brunn-Minkowski inequality for the eigenvalue $\Lambda_2$
in \cite{LiuMaXu}, we can prove that inequality \eqref{eqBMRearr} is in general strict in $\Omega_t$ and the functions
 $u_t$ and $\tilde u_t$ coincide in $\Omega_t$ if and only if $\Omega_0$ and $\Omega_1$ are homothetic.

\section{Proof of Theorem \ref{ThmBM1}}
\begin{proof}
First of all, we notice that the quotient in the definition \eqref{deftauk}
admits a minimizer. Indeed, consider the functional
\begin{equation}
{\bf F}(w)=\frac{1}{k+1}\int_{\Omega}(-w)S_k(D^2w)dx-
\int_{\Omega}w\,dx\,;
\label{RigTor2}
\end{equation}
following \cite{W} and \cite{W2}, we know that ${\bf F}$ has a
minimizer $u\in \Gamma_{k,0}(\Omega)$ which solves \eqref{eq1} and also minimizes the quotient
in \eqref{deftauk}. Then
\begin{equation}
\label{aggiunta2}
\tau_k(\Omega)=
\frac{\left[-\int_{\Omega}u\,dx \right]^{k+1}}{\int_{\Omega}(-u)S_k(D^2u)dx}\,.
\end{equation}
Integrating in $\Omega$ the equation in \eqref{eq1} we obtain
\begin{equation}
\label{perparti}
-\int_\Omega u S_k(D^2u)dx=-\int_\Omega u\, dx\,,
\end{equation}
and using (\ref{aggiunta2}) we find the following relation
\begin{equation}
\tau(K)=\left[-\int_\Omega u(x)\,dx
\right]^{k}\,.
\label{forrigtor}
\end{equation}

Next, we notice that $\tau_k:\R^n\to\R_+$ is a positively homogeneous operator of degree $(n+2)k$. Indeed, if $u$ solves problem \eqref{eq1} in $\Omega$,
it is easily seen that the function $$v(x)=\lambda^2u(x/\lambda)$$ solves the same problem in $\lambda\Omega$;
the homogeneity of $\tau_k$ easily follows from \eqref{forrigtor}.
In particular,
$\tau_2:\R^3\to\R_+$ is homogeneous of degree $10$, i.e.
$$\tau_2(\lambda\Omega)=\lambda^{10}\tau_2(\Omega)$$
for every $\lambda>0$ and every convex set $\Omega$.
\bigskip

Now let us denote by $u_i$ the solution of (\ref{eq1}) when $\Omega=\Omega_i$, for
$i=0,1,t$.
Thanks to Theorem \ref{ThmBMRearr} we have
\begin{equation}
\sqrt{-u_t((1-t)x+ty)} \geq
(1-t)\sqrt{-u_0(x)}+t\sqrt{-u_1(y)}
\,,\quad\forall x\in\Omega_0,\,y\in \Omega_1\,.
\label{BMdimostrare}
\end{equation}
Then by the arithmetic-geometric mean inequality it
follows
\begin{equation}
|u_t((1-t)x+ty)|\geq |u_0(x)|^{1-t}|u_1(y)|^t\,,\quad\forall x\in\Omega_0,\,y\in \Omega_1\,.
\label{bmfunzionipr}
\end{equation}
Now, extend $u_i$ as zero in $\R^3\setminus\Omega_i$, for
$i=0,1,t$. Inequality (\ref{bmfunzionipr}) continues to hold; indeed, if
either $x\notin\Omega_0$ or $y\notin\Omega_1$, then the right hand-side vanishes
and the left hand-side is nonnegative. Hence we may apply
the Pr\'ekopa-Leindler inequality and, taking in account \eqref{forrigtor}, we obtain
$$
\tau_2 (\Omega_t) \geq \tau_2(\Omega_0)^{1-t} \tau(\Omega_1)^t\,,
$$
where $\Omega_t=(1-t)\Omega_0+t\Omega_1$. The latter implies \eqref{BM1} thanks to the homogeneity of $\tau_2$, as observed in Section \ref{BMsec}.
\bigskip

Finally, we consider the equality case. Let $\Omega_0$, $\Omega_1$ and $t\in(0,1)$ be such that
equality holds in \eqref{BM1}; then $|u_t|$, $|u_0|$ and $|u_1|$, extended as zero in
$\R^3\setminus\Omega_t$, $\R^3\setminus\Omega_0$ and
$\R^3\setminus\Omega_1$ respectively, give equality in the
Pr\'ekopa-Leindler inequality. Hence,
by Proposition \ref{Prekopaleindler} we deduce that
\begin{equation}
u_1(y)=C\,u_0(ay+y_0)\,,
\label{Ugualert}
\end{equation}
where $C,a>0$ and $y_0\in\R^3$. Since $u_i(x)<0$ if and only if
$x\in \Omega_i$, $i=0,1$, we deduce that $\Omega_0$ and $\Omega_1$ must be
homothetic.

\end{proof}

\section{Proof of Theorem \ref{ThmUrysohn}, that is Minkowski addition as a rearrangment}

Before giving the proof of Theorem \ref{ThmUrysohn}, let us
recall some notions from convex geometry.

To every bounded convex set $K\subset\R^n$ it is associated its {\em support function} $h(K,\cdot) : \R^n\to [0,+\infty)$ in the following way:
$$
h_K(v)= \sup_{x\in K} \langle v,x \rangle, \qquad v\in\R^n.
$$
The support function of a convex set is obviously homogeneous of degree one and, as supremum of linear function, it is convex.
Moreover, for every $a\geq 0$ and every $K,L$ convex sets, it holds
\begin{equation}\label{nuova}
\begin{array}{ll}
& h_{aK} = ah_K,\\
& h_{K+L} = h_K + h_L.
\end{array}
\end{equation}
We refer to \cite{sc} for more details and properties of convex sets and support functions.

The mean width $\mw (\Omega) $ of a convex set $\Omega$ is defined as
$$
\mw (\Omega) = \dfrac{2}{n\wn} \int_{\sfe} h_\Omega(\theta)\, d\Hsfe(\theta),
$$
where $\wn$ is the measure of the unit ball $B_1=\{x\in\R^n\,:\,\|x\|\leq 1\}$ of $\R^n$ and $S^{n-1}$ is the unit sphere of $\R^n$, i.e.
$\sfe=\partial B_1$, while $\Hsfe$ denotes the $(n-1)$-dimensional Hausdorff measure..
We recall that the following Urysohn's inequality holds in the class of convex sets:
\begin{equation}\label{isobV}
\frac{V(\Omega)}{\omega_n}\leq\left(\frac{\mw(\Omega)}{2}\right)^n\,,
\end{equation}
where equality holds if and only if $\Omega$ is a ball.

Given any bounded convex set $\Omega$, we set
$$
\Omega^\sharp=\{x\in\R^n\,:\,\|x\|\leq\mw(\Omega)/2\}\,.
$$

\begin{proof}[Proof of Theorem \ref{ThmUrysohn}.]
Let $\Omega$ be a subset of $\R^3$ with mean width $\mw$ and {\em Steiner point} $\s$.
We recall that the Steiner point $\s(\Omega)$ of a convex set $\Omega$ can be defined as
$$
\s(\Omega) = \dfrac 3{4\pi} \int_{\sfedue} \theta\, h_\Omega(\theta)\, d\Hdue(\theta).
$$
Without loss of generality, we can assume that $\s$ coincides with the origin.

By Hadwiger's Theorem (see \cite{sc}, Section 3.3) there exists a sequence of rotations $\{\rho_N\}$ such that
$$
\Omega_N = \frac 1{N+1} (\rho_0\Omega+...+\rho_N\Omega)
$$
converges to $\Omega^\sharp$ in the Hausdorff metric.

Denote by $u_N$ the solution of problem \eqref{eq1} in $\Omega_N$.

Moreover, set as before $v=-\sqrt{-u}$ and, for every $N\in\N$, let $\tilde v_N$ be the Minkowski combination of the functions
$$
v_0(x)=v(\rho_0^{-1}x),\dots,\,v_N(x)=v(\rho_N^{-1}x)
$$
with ratio
$$t=(1/(N+1),\dots,1/(N+1))\in\Upsilon_N\,.
$$
Then set
$$
\tilde u_N=\tilde v_N^2\,.
$$
By Theorem \ref{ThmBMRearr}, $\tilde u_N$ is a supersolution of the problem solved by $u_N$ and it holds
$$
|u_N|\geq|\tilde u_N|\quad\text{ in }\Omega_N\,.
$$
Notice that the functions $\tilde u_N$ are uniformly bounded and uniformly lipschitz, since
\begin{equation}\label{maxestimate}
\max_{\overline\Omega_n}|\tilde u_N|=\max_{\overline\Omega}|u|\quad\text{ and }
\quad\max_{\overline\Omega_n}|D\tilde u_N|=\max_{\overline\Omega}|Du|\,,
\end{equation}
by the definition of $\tilde v_N$ and easy properties of infimal convolution.

Then, possibly up to a subsequence, they converge uniformly to function $\tilde u$ which is
a supersolution of problem \eqref{eq1} in $\Omega^\sharp$, thanks to the stability of viscosity solution under uniform
convergence. Hence
$$
|u^\sharp|\geq|\tilde u|\quad\text{ in }\Omega^\sharp\,,
$$
whence
\begin{equation}\label{usharplpversusutildelp}
\|u^\sharp\|_{L^p(\Omega^\sharp)}\geq\|\tilde u\|_{L^p(\Omega^\sharp)}\quad\text{for every }p\in(0,+\infty]\,.
\end{equation}
On the other hand, by the definition of $\tilde v_N$ and $\tilde u_N$, it holds
$$
\sqrt{-\tilde u_N\left(\frac1{N+1}\sum_{i=0}^Nx_i\right)}\geq\frac1{N+1}\sum_{i=0}^N\sqrt{-u(\rho_i^{-1}x_i)}\,,
$$
for every $x_i\in\rho_i\Omega$, $i=0,\dots,N$.
This yields
$$
|\tilde u_N(x)|\geq \prod_{i=0}^N|u(\rho_i^{-1}x_i)|^{\frac{1}{N+1}}
$$
for every $x_0,\dots,x_N\in\R^3$ such that $x=\frac{1}{N+1}\sum_{i=0}^Nx_i$,
once we extend $u_N$ and $u$ as zero outside of $\Omega_N$ and $\Omega$, respectively.
Then the Pr\'{e}kopa-Leindler inequality, Proposition \ref{Prekopaleindler}, implies
\begin{equation}\label{prekopa}
\|\tilde u_N\|^p_{L^p(\Omega_N)}\geq\prod_{i=0}^N\left(\int_{\rho_i\Omega}|u(\rho_i^{-1}\xi)|^p\,d\xi\right)^{\frac{1}{N+1}}=
\|u\|^p_{L^p(\Omega)}\quad\text{for every }p\in(0,+\infty]\,.
\end{equation}
Passing to the limit as $N\to\infty$, this yields
$$
\|\tilde u\|_{L^p(\Omega^\sharp)}\geq\|u\|_{L^p(\Omega)}\,,
$$
which jointly with \eqref{usharplpversusutildelp} gives the desired inequality.

Regarding equality conditions, if equality happens in \eqref{ineqlpnorm} for some $p<+\infty$, then
equality must hold in the Pr\'{e}kopa-Leindler inequality \eqref{prekopa} yielding that all the sets
$\rho_i\Omega$ are homothetic. Finally this implies $\rho_i\Omega=\Omega$ for every $i\in\N$, whence
$\Omega_N=\Omega$ for every $N\in\N$ and then $\Omega=\lim_{N\to\infty}\Omega_N=\Omega^\sharp$.

\end{proof}

We could also prove that
$$
\max_{\overline\Omega}|u|=\max_{\overline\Omega^\sharp}|u^\sharp|
$$
implies $\Omega$ is a ball. Indeed, the above equality in $L^\infty$ norms, owing to \eqref{maxestimate}, implies
$$\max_{\overline\Omega}|u|=\max_{\overline\Omega_N}|\tilde u_N|=\max_{\overline\Omega^\sharp}|\tilde u|=\max_{\overline\Omega^\sharp}|u^\sharp|\,.
$$
Then $\tilde u\geq u^\sharp$, $\tilde u$ is a supersolution and $u^\sharp$ is a solution of
\eqref{eq1} in $\Omega^\sharp$, while there exist an interior point $x$ (namely the minimum point of both functions) where
$\tilde u(x)=u^\sharp(x)$. Then, by the strong maximum principle, $\tilde u\equiv u^\sharp$, that is $\tilde u$ is a solution.
This would lead us to the conclusion in a sort of standard way (see for instance \cite{Colesanti}, \cite{CoCuSa} and \cite{LiuMaXu}),
since it forces the equality
$$
D^2v_i(x_i)=D^2v_j(x_j)\quad\,i\neq j
$$
for every $x_i\in\rho_i\Omega,\,x_j\in\rho_j\Omega$ such that $Dv_i(x_i)=Dv_j(x_j)$.

On the other hand, to use the classical strong maximum principle we should prove that $\tilde u$ is actually
a classical supersolution, for which we need to know that $D^2u>0$ in $\Omega$: it is possible to obtain this information thanks to the
{\em constant rank theorem} of \cite{LiuMaXu}.
Notice that, at the moment, with the technique presented here, may be we can prove that a solution
is strictly convex (see \cite{SaLectNotes}), but not that $D^2u>0$.

\section{Generalizations, isoperimetric inequalities, open problems and final remarks}

\subsection{Generalizations}

Let $p>0$, $p\neq 2$, $\lambda>0$ and consider problem \eqref{eqp}. Notice that, possibly by multiplying $u$ by $\lambda^{1/(2-p)}$,
we can always assume $\lambda=1$ and reduce to study the following normalized problem
\begin{equation}\label{eqp1}
\left\{\begin{array}{ll}
S_2(D^2u)=(-u)^p\quad&\text{in }\Omega\,,\\
u=0\quad&\text{on }\partial\Omega\,.
\end{array}
\right.
\end{equation}
Owing to \cite{CW1} (see also \cite{CW2, W2}), since $2=k>n/2=3/2$, we know that for every $p\in(0,2)\cup(2,+\infty)$,
if $\Omega$ is a $\mathcal{C}^+_1$ set, there exists an admissible non-trivial solution $u\in C^{3,\alpha}(\Omega)\cap C^{0,1}(\overline\Omega)$,
which minimizes in $\Phi_{2,0}(\Omega)$
the functional
$$
F_{2,p}(w)=\frac{1}{k+1}\int_{\Omega}(-w)S_2(D^2w)dx-\frac{1}{p+1}
\int_{\Omega}(-w)^{p+1}\,dx\,.
$$
A straightforward argument shows then that $u$ also minimize in $\Phi_{2,0}$ the quotient
$$
Q_{2,p}(w)=\frac{\int_{\Omega}(-w)S_2(D^2w)dx}{(\int_{\Omega}(-w)^{p+1}\,dx)^{3/{p+1}}}\,.
$$

\begin{proof}[Proof of Theorem \ref{ThmConvp}.]
Let
$$v=-(-u)^{\frac{2-p}{4}}\,.
$$
A straightforward calculation shows that $v$ solves
\begin{equation}\label{eqvp}
\left\{\begin{array}{ll}
v^2S_2(D^2v)+\frac{(p+2)}{2-p}\,v\,S_1(P(Dv)D^2v)=\frac{(p-2)^2}{16}&\,\text{ in }\Omega\,,\\
v=0&\,\text{ on }\partial\Omega\,.
\end{array}
\right.
\end{equation}

Now we can consider the convex envelope $\tilde v$ of $v$ and argue exactly as in Theorem \ref{ThmConv1} to get that
$\tilde v$ is an admissible solution of
$$
\left\{\begin{array}{ll}
\tilde v^2S_2(D^2\tilde v)+\frac{(p+2)}{2-p}\,\tilde v\,S_1(P(D\tilde v)D^2\tilde v)\leq\frac{(p-2)^2}{16}&\,\text{ in }\Omega\,,\\
\tilde v=0&\,\text{ on }\partial\Omega\,,
\end{array}
\right.
$$
whence $\tilde u=-(-\tilde v)^{\frac{4}{2-p}}$ is $2$-convex and satisfies
$$
S_2(\tilde u)\leq (-\tilde u)^p\,.
$$
Multiplying by $-\tilde u$ and integrating over $\Omega$, we see that
$$
\frac{\int_{\Omega}(-\tilde u)S_2(D^2\tilde u)dx}{\int_{\Omega}(-\tilde u)^{p+1}\,dx}\leq 1\,,
$$
whence
$$
Q_{2,p}(\tilde u)\leq\|\tilde u\|_{L^{p+1}(\Omega)}^{p-2}\leq \|u\|_{L^{p+1}(\Omega)}^{p-2}\,,
$$
where the second inequality is obvious since $|\tilde u|\geq|u|$ and $p<2$.
Hence $\tilde u$ is a minimizer of $Q_{2,p}$ and then solves \eqref{eqp}.

\end{proof}

Analogously to the definition of $\tau_2$, we set
$$
\tau_ {2,p}(\Omega)=\sup\{Q_{2,p}(w)^{-1}\,:\,w\in\Phi_{2,0}(\Omega),\,w<0\}
$$
and we get
\begin{equation}\label{tau2p}
\tau_{2,p}(\Omega)=Q_{2,p}(u)^{-1}=\|u\|_{L^{p+1}(\Omega)}^{2-p}\,.
\end{equation}
Notice that $\tau_{2,p}$ is positively homogeneous of degree $\frac{p+10}{p+1}$, i.e.
$$
\tau_{2,p}(\lambda\Omega)=\lambda^{\frac{p+10}{p+1}}\tau_{2,p}(\Omega)\quad\text{for every }\lambda\geq 0\,.
$$
We can prove the following Brunn-Minkowski inequality for $\tau_{2,p}$.
\begin{thm}\label{ThmBMp}
Let $\Omega_0$ and $\Omega_1$ be $\mathcal{C}_1^+$ subsets of $\R^3$ and $t\in[0,1]$ and let $p\in(0,2)$.
Then
\begin{equation}\label{BMp}
\tau_{2,p}((1-t)\Omega_0+t\Omega_1)^{\frac{p+1}{p+10}}\geq (1-t)\tau_{2,p}(\Omega_0)^{\frac{p+1}{p+10}}+
t\tau_{2,p}(\Omega_1)^{\frac{p+1}{p+10}}\,.
\end{equation}
Moreover, equality holds in \eqref{BMp} if and only if $\Omega_0$ and $\Omega_1$ are homothetic.
\end{thm}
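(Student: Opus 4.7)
\medskip
\noindent\textbf{Proof plan.} The strategy mirrors the route that led from Theorem~\ref{ThmConv1} through Theorem~\ref{ThmBMRearr} to Theorem~\ref{ThmBM1}, with the exponent $\tfrac12$ replaced by $\tfrac{2-p}{4}$ throughout. The plan is to: (i) establish a pointwise ``Minkowski rearrangement'' inequality for solutions of \eqref{eqp1} on $\Omega_0,\Omega_1,\Omega_t$; (ii) promote it to a log-concavity form of Brunn--Minkowski for $\tau_{2,p}$ via Pr\'ekopa--Leindler; (iii) conclude \eqref{BMp} by homogeneity.

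First I would prove an analogue of Theorem~\ref{ThmBMRearr}: if $u_0,u_1$ are the admissible solutions of \eqref{eqp1} in $\Omega_0,\Omega_1$ provided by Theorem~\ref{ThmConvp}, the functions $v_i=-(-u_i)^{(2-p)/4}$ are convex, vanish on $\partial\Omega_i$, and satisfy $|Dv_i|\to+\infty$ at the boundary by Hopf's lemma, so Lemma~\ref{lemmaconvolu} applies to their Minkowski combination $\tilde v_t$ with ratio $(1-t,t)$. Repeating the convex-envelope argument of Theorem~\ref{ThmConvp} (touching $\tilde v_t$ from below by a test function $\phi$ at a non-contact point $\bar x$, using \eqref{hessianeconvolu} and the convexity of the operator coming from Lemma~\ref{lemmaLMX} combined with case (iii) of \cite[Lemma A.1]{IS}, plus the usual $\varepsilon$-regularization to remove the non-degeneracy assumption on $D^2v_i$), I would obtain that $\tilde v_t$ satisfies, in the viscosity sense,
\begin{equation*}
\tilde v_t^{2}\,S_2(D^2\tilde v_t)+\frac{p+2}{2-p}\,\tilde v_t\,S_1(P(D\tilde v_t)D^2\tilde v_t)\le\frac{(p-2)^2}{16}\quad\text{in }\Omega_t,
\end{equation*}
with $\tilde v_t=0$ on $\partial\Omega_t$. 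Equivalently, $\tilde u_t=-(-\tilde v_t)^{4/(2-p)}$ is an admissible supersolution of \eqref{eqp1} on $\Omega_t$, and the comparison principle for $S_2$ with monotone right-hand side yields $u_t\le\tilde u_t$, i.e.\ $|u_t|\ge|\tilde u_t|$ in $\Omega_t$.

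Unwinding the definition of $\tilde v_t$ gives, for every $x\in\Omega_0$, $y\in\Omega_1$,
\begin{equation*}
|u_t((1-t)x+ty)|^{(2-p)/4}\ge|\tilde v_t((1-t)x+ty)|\ge(1-t)|u_0(x)|^{(2-p)/4}+t|u_1(y)|^{(2-p)/4}.
\end{equation*}
The arithmetic--geometric mean inequality and raising to the power $4(p+1)/(2-p)$ produce
\begin{equation*}
|u_t((1-t)x+ty)|^{p+1}\ge|u_0(x)|^{(1-t)(p+1)}\,|u_1(y)|^{t(p+1)}.
\end{equation*}
After extending $|u_i|^{p+1}$ by zero outside $\Omega_i$, the inequality persists trivially; Proposition~\ref{Prekopaleindler} then yields
\begin{equation*}
\|u_t\|_{L^{p+1}(\Omega_t)}^{\,p+1}\ge\|u_0\|_{L^{p+1}(\Omega_0)}^{(1-t)(p+1)}\,\|u_1\|_{L^{p+1}(\Omega_1)}^{t(p+1)},
\end{equation*}
which, via the identity \eqref{tau2p}, is the log-concavity form $\tau_{2,p}(\Omega_t)\ge\tau_{2,p}(\Omega_0)^{1-t}\tau_{2,p}(\Omega_1)^t$. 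Since $\tau_{2,p}$ is positively homogeneous of degree $(p+10)/(p+1)>0$, the equivalence (i)--(iv) recalled in Section~\ref{BMsec} upgrades this multiplicative inequality to the sum form \eqref{BMp}.

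For the equality case, if equality holds in \eqref{BMp} for some $t\in(0,1)$, then equality must propagate back to the Pr\'ekopa--Leindler step. Dubuc's equality condition (Proposition~\ref{Prekopaleindler}) forces $|u_1|^{p+1}(y)=C|u_0|^{p+1}(ay+y_0)$ a.e.\ for some $C,a>0$ and $y_0\in\R^3$; matching the positivity sets $\{u_i<0\}=\Omega_i$ then gives $\Omega_1=(\Omega_0-y_0)/a$, so $\Omega_0$ and $\Omega_1$ are homothetic, exactly as in Theorem~\ref{ThmBM1}.

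The main obstacle is Step (i): verifying that the convex-envelope / inf-convolution argument of Theorems~\ref{ThmConv1} and~\ref{ThmBMRearr} still works when the coefficient $1$ in front of $v\,S_1(P(Dv)D^2v)$ is replaced by $(p+2)/(2-p)$. The point is that for $p\in(0,2)$ this coefficient is strictly positive, so the combination of Lemma~\ref{lemmaLMX} with case (iii) of \cite[Lemma A.1]{IS} still delivers the required convexity of the relevant operator in $(t,A)\in\R\times\mathcal S_3^{++}$; no new algebraic identity is needed, only a careful check that the sign of the lower-order term is compatible with the subsolution direction of the viscosity inequality. Everything else is a direct transcription of the arguments already developed for $p=0$.
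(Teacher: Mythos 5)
Your overall strategy (inf-convolution $\tilde v_t$ of the convex transforms, supersolution verification via Lemma~\ref{lemmaLMX} and \cite[Lemma A.1]{IS}, Pr\'ekopa--Leindler, homogeneity, Dubuc) is the one the paper uses, and the supersolution step is argued correctly: the factor $(p+2)/(2-p)$ multiplies a term which is linear in $v$ and therefore does not disturb the convexity of $F(s,A)=s^2 S_2(A^{-1})/S_1(PA^{-1})$, while the positive signs of $(p+2)/(2-p)$ and $(p-2)^2/16$ are exactly what is needed to push the concavity of $g(A)=S_1(PA^{-1})^{-1}$ in the right direction.

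The genuine gap is the sentence ``the comparison principle for $S_2$ with monotone right-hand side yields $u_t\le\tilde u_t$.'' For problem \eqref{eqp1} with $p>0$ the relevant operator is $F(u,D^2u)=S_2(D^2u)-(-u)^p$, whose zeroth-order part satisfies $\partial F/\partial u=p(-u)^{p-1}>0$. This is the \emph{wrong} monotonicity for a comparison principle (the operator is not proper), and indeed \eqref{eqp1} is a nonlinear eigenvalue-type problem for which comparison and uniqueness can fail; this is precisely why Theorem~\ref{ThmConvp} is stated as an existence result and its proof does not invoke any comparison principle. Your pointwise inequality $|u_t|^{(2-p)/4}\ge|\tilde v_t|$ therefore does not follow, and the subsequent application of Pr\'ekopa--Leindler to $|u_t|^{p+1}$ is unjustified. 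The paper sidesteps this entirely by going through the variational characterization rather than the PDE comparison: from the supersolution inequality $S_2(D^2\tilde u_t)\le(-\tilde u_t)^p$, multiply by $-\tilde u_t$ and integrate to obtain $Q_{2,p}(\tilde u_t)\le\|\tilde u_t\|_{L^{p+1}(\Omega_t)}^{p-2}$, hence $\tau_{2,p}(\Omega_t)\ge Q_{2,p}(\tilde u_t)^{-1}\ge\|\tilde u_t\|_{L^{p+1}(\Omega_t)}^{2-p}$; then apply Pr\'ekopa--Leindler to $|\tilde u_t|^{p+1}$, $|u_0|^{p+1}$, $|u_1|^{p+1}$ (for which the pointwise hypothesis holds by construction of $\tilde v_t$, without any comparison) to get $\|\tilde u_t\|_{L^{p+1}(\Omega_t)}^{2-p}\ge\tau_{2,p}(\Omega_0)^{1-t}\tau_{2,p}(\Omega_1)^t$ via \eqref{tau2p}. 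The rest of your argument (homogeneity upgrade, Dubuc's equality condition giving homothety) then goes through unchanged once the PL step is run on $\tilde u_t$ rather than $u_t$.
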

\begin{proof}
Denote by $u_i$ the solution of (\ref{eq1}) when $\Omega=\Omega_i$ and let $v_i=-(-u_i)^{(2-p)/4}$, for
$i=0,1$. The function $v_i$ solves \eqref{eqvp} in $\Omega_i$, $i=0,1$.
Let $\tilde v_t$ be the Minkowski linear combination of $v_0$ and $v_1$, with ratio $t$; then
$\tilde v_t$ turns out to be a supersolution of \eqref{eqvp} in $\Omega_t$ (the argument is the same as in the previous theorems and
 I leave the details to the reader).
Arguing now as in Theorem \ref{ThmConvp} and using
Pr\'ekopa-Leindler  as in the proof of Theorem \ref{ThmBM1}, we finally get inequality \eqref{BMp}, including equality conditions.
\end{proof}

\subsection{Isoperimetric inequalities}

As observed in \cite[Remark 6.1]{BiaSal}, every Brunn-Minkowski inequality, owing to Hadwiger's Theorem (arguing as in \cite[Corollary 2.2]{BiaSal}),
implies an Urysohn's inequality
which states an optimality property of the ball for the involved inequality among convex sets with given mean width.
These inequalities are {\em sharp}, in the sense that, continuing to
argue similarly to \cite{BiaSal}, we can see that equality holds in them if and only if $\Omega$ is a ball, but in many cases they are not
optimal, in the sense that some other stronger inequality may be proved.
In this context, we have \eqref{UryL} and \eqref{Ury1}, that can be rephrased by saying that,
in the class of regular convex sets with given mean width, $\Lambda_2$ and $\tau_2$ attain respectively the minimum and the maximum
when $\Omega$ is the ball. On the other hand, using a rearrangement technique by quermassintegrals introduced by Tso in \cite{Tso} and further developed
by Trudinger in \cite{Trudi}, one could prove \eqref{IsoL} and \eqref{Iso1}.
As already said in the introduction, the latter inequalities imply \eqref{UryL} and \eqref{Ury1}, due to
classical Urysohn's inequality and to the monotonicity of the involved functionals.
However, it must be noticed that \cite{Tso} is not sufficient to prove \eqref{IsoL} and \eqref{Iso1}, since it treats only convex
functions (and the solutions of \eqref{eqL} and \eqref{eq1} are in general not convex), while the results of
\cite{Trudi} mainly relies on the generalization to $k$-convex sets of isoperimetric inequalities for quermassintegrals, claimed in \cite{Trudi2}, but
whose proof there contained is incomplete. On the other hand, it is possible to apply the rearrangement procedure of
Tso and Trudinger, once proved the convexity of the level sets of the solutions of \eqref{eqL} and \eqref{eq1}.
Then a complete proof of \eqref{IsoL} and \eqref{Iso1}
can be finally obtained by applying the results of \cite{Trudi, Tso}, after
Theorem \ref{ThmConvL} and Theorem \ref{ThmConv1}, giving the following theorem.
\begin{thm}\label{ThmIsop}
Among $\mathcal{C}_1^+$ sets $\Omega$ in $\R^3$ with given surface, the functionals $\Lambda_2$ and $\tau_2$ achieve respectively its minimum
and its maximum value when $\Omega$ is a ball, i.e.
inequalities \eqref{IsoL} and \eqref{Iso1} hold.
\end{thm}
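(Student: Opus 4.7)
The plan is to combine the convexity results established earlier with a quermassintegral-type rearrangement à la Tso--Trudinger, using the convexity of level sets (rather than the convexity of the solution itself) as the input that bypasses the restriction to convex functions in \cite{Tso}.

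First, I would extract from Theorem \ref{ThmConvL} and Theorem \ref{ThmConv1} the geometric information we actually need: the level sets of the admissible solutions $u$ to \eqref{eqL} and \eqref{eq1} are convex. Indeed, $-\log(-u)$ and $-\sqrt{-u}$ are convex, and both are strictly monotone functions of $u$, so the sublevel sets $\{u<c\}$ (for $c<0$) are convex; by \cite{cns} they are also smooth, and as (regular) level sets of a $2$-convex function they are $1$-convex, i.e.\ belong to $\mathcal{C}_1^+$ (up to regularity, which can be handled by approximation).

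Next I would set up the rearrangement. Let $\Omega^*$ be the ball with the same surface area as $\Omega$ and denote by $u^*$ the corresponding admissible solution in $\Omega^*$ (of \eqref{eqL} or \eqref{eq1} as appropriate). For each $c<0$ let $\Omega_c=\{u<c\}\subset\Omega$ and let $\Omega_c^*$ be the ball with the same surface area as $\Omega_c$. Define $u^{**}:\Omega^*\to\R$ to be the radial decreasing function whose sublevel set at level $c$ is exactly $\Omega_c^*$ (a Schwarz-type symmetrization by the quermassintegral $W_1$, i.e.\ surface area). Since the original level sets are convex $\mathcal{C}_1^+$ sets, the classical isoperimetric inequality for surface area (which for convex sets is contained in Urysohn/quermassintegral inequalities on $\coco$, independently of the delicate extension to general $k$-convex sets questioned in \cite{Trudi2}) ensures the monotonicity $\Omega_c^*\subset\Omega^*$ and that $u^{**}$ is well defined. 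The rearrangement inequalities then follow the Tso--Trudinger scheme: using the co-area formula together with the Newton--Maclaurin and Aleksandrov--Fenchel inequalities on each level surface, one compares $\int_\Omega(-u)\,S_2(D^2u)$ and $\int_\Omega|u|^{k+1}$ (resp.\ $\int_\Omega|u|$) with the analogous integrals for $u^{**}$, the latter being controlled by the actual solution $u^*$ in $\Omega^*$ through the variational characterization of $\Lambda_2$ (resp.\ $\tau_2$). Assembling these comparisons yields \eqref{IsoL} and \eqref{Iso1}.

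The main obstacle, as already highlighted in the paragraph preceding the statement, is precisely that the rearrangement machinery of \cite{Tso} is formulated for convex functions and that \cite{Trudi2} invokes quermassintegral isoperimetric inequalities for $k$-convex sets whose proof is incomplete. The key observation that makes the argument go through is that once the level sets are known to be convex, one may perform the rearrangement level-set by level-set using only isoperimetric inequalities on \emph{convex} bodies, where the classical theory (cf.\ \cite{sc}) provides everything needed; the function $u^{**}$ so obtained is automatically admissible (its level sets are balls, hence $\mathcal{C}_1^+$), and all the curvature/mean-curvature integrals on $\partial\Omega_c$ can be estimated by standard Aleksandrov--Fenchel inequalities in $\coco$. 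Thus the convexity of level sets provided by Theorems \ref{ThmConvL} and \ref{ThmConv1} is exactly the missing ingredient that completes the Tso--Trudinger program and yields \eqref{IsoL} and \eqref{Iso1}. The equality case is standard: equality in the Aleksandrov--Fenchel step forces each convex level set to be a ball concentric with $\Omega^*$, hence $\Omega=\Omega^*$.
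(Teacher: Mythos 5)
Your proposal is correct and follows essentially the same route as the paper: both arguments use Theorems \ref{ThmConvL} and \ref{ThmConv1} to conclude that the level sets of $u$ are convex, and then invoke the Tso--Trudinger quermassintegral symmetrization (the $1$-symmetrand of $u$, i.e.\ what the paper calls $u_1^*$ and you call $u^{**}$), with the crucial remark that convexity of the level sets allows one to replace the incompletely proved inequalities of \cite{Trudi2} by the classical Aleksandrov--Fenchel inequalities for convex bodies. The paper is terser, simply citing Theorems 3.1 and 4.1 of \cite{Trudi} to obtain that $u_1^*$ is a supersolution (for $\tau_2$) and the displayed Rayleigh-quotient bound (for $\Lambda_2$), and then concluding as in Theorem \ref{ThmBM1}; your re-derivation of the rearrangement scheme fills in the same steps in more detail and adds an explicit (and correct) treatment of the equality case, which the paper leaves implicit.
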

\begin{proof}
To prove \eqref{Iso1}, we define the $1$-symmetrand $u^*_1$ of the solution $u$ in $\Omega$ of
\eqref{eq1} ($k=2$, $n=3$) as in \cite{Trudi} and apply Theorem 3.1 of \cite{Trudi} to obtain that
$u_1^*$ is a supersolution of problem \eqref{eq1} in $\Omega^*$. Then we have just to argue as in Theorem \ref{ThmBM1}.
Notice that Theorem 3.1 of \cite{Trudi} works fine in this case because we know, by Theorem \ref{ThmConv1}, that all the level
sets of $u$ are convex, then in its proof we can use the classical Alexandrov-Fenchel inequalities
(see \cite{sc}) in place of the inequalities from \cite{Trudi2}.

To prove \eqref{IsoL}, we notice that, thanks to Theorem \ref{ThmConvL}, we can apply Theorem 4.1 of \cite{Trudi} and
classical
Alexandrov-Fenchel inequalities to obtain
$$
\frac{-\int_{\Omega^*} u_1^*\,S_2(D^2u_1^*)\,dx}{\int_{\Omega^*}|u_1^*|^{3}dx}\leq\Lambda_2(\Omega)\,,
$$
whence the desired inequality immediately descends.

\end{proof}

\begin{rmk}
Notice that a proof of isoperimetric inequalities for quermassintegrals of {\em starshaped} $k$-convex sets has been recently obtained by
P. Guan and J. Li, see \cite{GuanLi}.
This suggest that it would be interesting to investigate the starshapedness of level sets of solutions of
\eqref{eqL} and \eqref{eq1} when $\Omega$ is a $k$-convex starshaped set.
\end{rmk}
\begin{rmk}
Similar observations to the ones that leaded to Theorem \ref{ThmIsop} can be carried on about the possible comparison between Theorem
\ref{ThmUrysohn} and \cite[Theorem 3.1]{Trudi}; that is: owing to the square root convexity of the solutions to \eqref{eq1},
it is possible to compare the solution in a convex domain $\Omega$ with the solution in a ball $\Omega^*$ with the same
surface area as $\Omega$. Again it would be also interesting to study the starshaped case, after \cite{GuanLi}.
\end{rmk}

\subsection{Open problems}

Finally I would like to mention some interesting open problems that are naturally suggested by the results
here contained.
\medskip

The first obvious question is whether these results can be extended to the general case $2<k<n$ and $n>3$ or not.
I strongly believe that log-convexity of the solutions of \eqref{eqL} and the square-root convexity of the solutions of
\eqref{eq1} hold also for $k>2$ when $n>3$. The easiest case to be treated as a next step should be $k=n-1$ for general dimension
$n$. Another natural question is whether the convexity properties stated in Theorems \ref{ThmConvL} and \ref{ThmConv1} are optimal
or not. In \cite{LiuMaXu} the authors prove that the square-root convexity of the solution is optimal for problem \eqref{eq1}.
\medskip

An interesting question, in connection with any Brunn-Minkowski inequality, is about a possible
Rogers-Shephard inequality for the involved functional. For instance, \eqref{BM1} applied in the case $\Omega_0=\Omega$
and $\Omega_1=-\Omega$ reads
$$
\tau_2(D\Omega)\geq2^{10}\tau_2(\Omega)\,,
$$
where $D\Omega=\Omega+(-\Omega)$ is the so called {\em difference body} of $\Omega$. In the case of volume, the classical
Brunn-Minkowski inequality in $\R^n$ yields $V(D\Omega)\geq 2^nV(\Omega)$. Rogers and Shephard \cite{RoSh} proved
a reverse inequality:
$$
V(D\Omega)\leq\binom{2n}{2}V(\Omega)\,.
$$
Then the question is: does there exist a constant $C$ such that
$$
\tau_2(D\Omega)\leq C\,\tau_2(\Omega)\quad\text{for every convex set }\Omega\subset\R^3\,?
$$
An analogous question rises for $\Lambda_2$ and for any other functionals satisfying a Brunn-Minkowski inequality.
\medskip

One of the most important result in convex geometry regards the solution of the Minkowski problem; roughly speaking,
the Minkowski problem asks to find a convex body with given Gauss curvature in function
of the normal direction to the boundary. Brunn-Minkowski inequality suggests a way to solve the Minkowski problem
with a variational argument; moreover the characterization of equality conditions in the Brunn-Minkowski inequality
yields uniqueness in Minkowski problem.
As observed by Jerison, once proved a Brunn-Minkowski inequality for some functional, it is natural
to pose the question of a related Minkowski problem, see \cite[Section 4]{Colesanti} for a nice presentation of this argument
and more references.
Suitable Minkowski problems are solved for
the Newton capacity \cite{J1}, for the first eigenvalue of the Laplacian \cite{J2} and for torsional rigidity \cite{CF}; a study for the case of $p$-capacity ($1<p<2$) is actually in progress, see \cite{CLSXYZ}.

After Theorem \ref{ThmBM1}, it is then natural to raise the same question for $\tau_2$.
The first basic step in this direction is given by the following representation formula, holding for a $C^2_+$ domain in $\R^n$ and
any $k\in\{1,\dots,n\}$:
\begin{equation}\label{representation1}
\tau_k(\Omega)=\frac{1}{k(n+2)}\int_\sfe h_\Omega(X)\,|Du(\nu_\Omega^{-1}(X))|^{k+1}\,d\sigma^\Omega_{n-k}(X)\,,
\end{equation}
where $u$ is the solution of \eqref{eq1}, $h_\Omega$ is the support function of $\Omega$, $\nu_\Omega$ is the Gauss map
of $\partial\Omega$ (then $\nu^{-1}(X)$ is the point
on $\partial\Omega$ where the outer normal direction is $X$) and
$\sigma_{n-k}^\Omega$ denotes the $(n-k)$-area measure of $\partial\Omega$, whose density
is $S_{n-k}(r_1,\dots,r_{n-1})(X)$ where $r_1,\dots,r_{n-1}$ are the principal radii of curvature of $\partial\Omega$ at the point
$\nu_\Omega^{-1}(X)$.
Formula \eqref{representation1} is just a particular case of \cite[Proposition 4.1]{BNST}.

From \cite[Proposition 4.1]{BNST} it is also easy to obtain an analogous representation formula for $\Lambda_k$:
\begin{equation}\label{representationL}
\Lambda_k(\Omega)=\frac{1}{2k}\int_\sfe h_\Omega(X)\,|Du(\nu_\Omega^{-1}(X))|^{k+1}\,d\sigma^\Omega_{n-k}(X)\,,
\end{equation}
where $u$ is the solution of \eqref{eqL} such that $\int_\Omega|u|^{k+1}dx=1$ and the rest of notation is as before.
As \eqref{representation1} for $\tau_2$, \eqref{representationL} is the first step towards a Minkowski problem for $\Lambda_2$.

\end{document}